\theoremstyle{plain}
\theoremstyle{definition}\newtheorem{theorem}{Theorem}[section]
\theoremstyle{plain}\newtheorem{lemma}[theorem]{Lemma}
\theoremstyle{plain}\newtheorem{coro}[theorem]{Corollary}
\theoremstyle{plain}
\theoremstyle{remark}\newtheorem{remark}{Remark}[section]
\theoremstyle{definition}
\theoremstyle{plain}
\newcommand{\norm}[1]{\left\|#1\right\|}
\newcommand{\B}{\Big}
\newcommand{\be}{\begin{equation}}
\newcommand{\ee}{\end{equation}}
 \newcommand{\ba}{\begin{aligned}}
 \newcommand{\ea}{\end{aligned}}
  \newcommand{\f}{\frac}
  \newcommand{\ben}{\begin{enumerate}}
   \newcommand{\een}{\end{enumerate}}
\newcommand{\Rmnum}[1]{\expandafter\@slowromancap\romannumeral #1@}
\numberwithin{equation}{section}
\begin{document}
\title{Anisotropic Hardy-Sobolev inequality  in mixed Lorentz spaces with applications to the axisymmetric Navier-Stokes equations }
 \author{ Yanqing Wang\footnote{College of Mathematics and Information Science,  Zhengzhou University of Light Industry, Zhengzhou, Henan  450002,  P. R. China Email: wangyanqing20056@gmail.com},   \;~ Yike Huang\footnote{ College of Mathematics and Information Science, Zhengzhou University of Light Industry, Zhengzhou, Henan  450002,  P. R. China Email: huang\_yike@outlook.com},~~Wei Wei\footnote{School of Mathematics and Center for Nonlinear Studies, Northwest University, Xi'an, Shaanxi 710127, P. R. China Email: ww5998198@126.com },
   \;~~and ~~Huan Yu \footnote{ School of Applied Science, Beijing Information Science and Technology University, Beijing, 100192, P. R. China Email:  yuhuandreamer@163.com}
   }
\date{}
\maketitle
\begin{abstract}
 In this paper, we establish several new anisotropic Hardy-Sobolev inequalities in mixed Lebesgue  spaces and mixed Lorentz spaces, which covers many known corresponding results. As an application, this type of inequalities allows us to generalize some
 regularity criteria of the 3D axisymmetric Navier-Stokes equations.

 \end{abstract}
\noindent {\bf MSC(2020):}\quad 35A23, 42B35, 35L65, 76D03 \\\noindent
{\bf Keywords:} Hardy-Sobolev  inequality;  Anisotropic;  mixed Lorentz spaces; Navier-Stokes equations; regularity \\

\section{Introduction}

\subsection{  Hardy-Sobolev   inequality}
The classical  Hardy-Sobolev inequality in \cite{[BCD],[Tao],[Hardy],[FS]} reads
  \be\label{hardy}\ba
 &\B\|\f{f(x)}{|x| }\B\|_{L^{p }(\mathbb{R}^{n})}\leq C
 \|\nabla f \|_{L^{p }(\mathbb{R}^{n})}, 1< p<n,\\
 &\B\|\f{f(x)}{|x|^{s}}\B\|_{L^{2}(\mathbb{R}^{n})}\leq C
 \|\Lambda^{s}f\|_{L^{2}(\mathbb{R}^{n})},   0\leq s<\f{n}{2}.
\ea\ee
 A variant of Hardy-Sobolev inequality involving fractional derivatives in the   $L^{p}$
 framework is proved in \cite{[MWZ],[zz]}
\be\label{HSF}
\B\|\f{f(x)}{|x|^{s}}\B\|_{L^{p}(\mathbb{R}^{n})}\leq C
 \|\Lambda^{s}f\|_{L^{p}(\mathbb{R}^{n})},   0\leq s<\f{n}{p}.
 \ee
 The Hardy-Sobolev  type inequality   and its extension play  an important role in the study of nonlinear elliptic equations \cite{[BV]}, heat equations with a singular potential \cite{[GZ]}, Morawetz-type estimates of wave equations \cite{[Tao]} and Schr\"odinger equations \cite{[zz]}, and
the Navier-Stokes equations \cite{[Yu],[CFZ],[CFZ1]}.

  Two kinds  of natural  refinement of the Hardy-Sobolev inequality are to study it in more general spaces and to consider it in the anisotropic case.
It is well known that the Lorentz spaces $L^{p,\infty}(\mathbb{R}^{n})$ and anisotropic Lebesgue spaces $L^{\overrightarrow{p} }(\mathbb{R}^{n})$ are both generalizations of the Lebesgue spaces  $L^{p }(\mathbb{R}^{n})$, where $\overrightarrow{p}=(p_1,p_2,\cdots,p_n)$.
 Making use of  the H\"older inequality in Lorentz spaces in \cite{[Neil]}  and the famous fact that $|x|^{-s}\in L^{\f{n}{s},\infty}(\mathbb{R}^{n})$,   Hajaiej-Yu-Zhai \cite{[HYZ]} established the following Hardy-Sobolev inequality   in Lorentz spaces
\be\label{HSL}
 \B\|\f{f(x)}{|x|^{s}}\B\|_{L^{p,q}(\mathbb{R}^{n})}\leq C
 \|\Lambda^{s}f\|_{L^{p,q}(\mathbb{R}^{n})}, 1<p<\infty,0<s<\f{n}{p}, 1 \leq q\leq\infty.
\ee
The first objective of this paper is to extend the Hardy-Sobolev inequality \eqref{hardy}-\eqref{HSF}  from usual  Lebesgue spaces
to   anisotropic Lebesgue spaces. More precisely, we state our first result as follows.
\begin{theorem}\label{the1.1}
Assume that $1<p_{i},q_{i}<\infty$ and $0<s<\f{n}{p_{i}}$, $i=1,2,\cdots, n$. Then there exists a positive constant $C$ such that
\be\label{HSAL}\ba
\B\|\f{f(x)}{|x|^{s}}\B\|_{L^{\overrightarrow{p} ,\overrightarrow{q}}(\mathbb{R}^{n})}
\leq  C
 \|\Lambda^{s}f\|_{L^{\overrightarrow{p},\overrightarrow{q}} (\mathbb{R}^{n})}.
\ea\ee
\end{theorem}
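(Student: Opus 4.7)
The plan is to adapt the scheme used by Hajaiej--Yu--Zhai to derive \eqref{HSL}, replacing the observation $|x|^{-s}\in L^{n/s,\infty}$ with a tensor-product bound on the weight obtained via weighted AM--GM, and then finishing with a H\"older inequality and a Riesz potential mapping, both in mixed Lorentz spaces. The anisotropy of the theorem enters only through the weight; the remaining two ingredients will have been established beforehand as preliminary lemmas.

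Concretely, weighted AM--GM gives
$$|x|^{2}=\sum_{i=1}^{n} x_i^{2}\geq \prod_{i=1}^{n} |x_i|^{2w_i}\qquad\text{whenever }w_i\geq 0,\ \sum_i w_i=1,$$
hence $|x|^{-s}\leq C\prod_i|x_i|^{-sw_i}$. The hypothesis $s<n/p_i$ for every $i$ is equivalent to $\sum_i 1/(sp_i)>1$, so the choice $w_i:=\lambda/(sp_i)$ with $\lambda:=s/\sum_j 1/p_j\in(0,1)$ is admissible and produces $0<sw_i<1/p_i$ strictly. Setting $a_i:=1/(sw_i)$ and $1/c_i:=1/p_i-sw_i>0$ yields $\sum_i(1/p_i-1/c_i)=s$, which matches the scaling of the Riesz potential $I_s$. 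A mixed-Lorentz H\"older inequality with one weak factor then gives
$$\left\|\f{f(x)}{|x|^{s}}\right\|_{L^{\overrightarrow{p},\overrightarrow{q}}} \leq C \left\|\prod_i|x_i|^{-sw_i}\right\|_{L^{\overrightarrow{a},\overrightarrow{\infty}}} \|f\|_{L^{\overrightarrow{c},\overrightarrow{q}}},$$
and the first factor is a pure tensor whose mixed weak-Lorentz norm factorizes into $\prod_i\||x_i|^{-sw_i}\|_{L^{a_i,\infty}(\R)}$, each factor finite since $a_i=1/(sw_i)$. Writing $f=c_s I_s(\Lambda^s f)$ and applying the Riesz potential bound $I_s:L^{\overrightarrow{p},\overrightarrow{q}}\to L^{\overrightarrow{c},\overrightarrow{q}}$ supplies $\|f\|_{L^{\overrightarrow{c},\overrightarrow{q}}}\leq C\|\Lambda^s f\|_{L^{\overrightarrow{p},\overrightarrow{q}}}$, which combined with the preceding display yields \eqref{HSAL}.

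The main obstacle lies in the preliminary mixed-Lorentz machinery rather than in the structural argument above. The H\"older step should iterate transparently from the classical one-variable Lorentz H\"older inequality. The Riesz mapping $I_s:L^{\overrightarrow{p},\overrightarrow{q}}\to L^{\overrightarrow{c},\overrightarrow{q}}$ is more delicate because $I_s$ does not factor as a tensor product of one-variable operators; I expect to handle it by dominating the kernel via a second AM--GM, $|x-y|^{-(n-s)}\leq C\prod_i|x_i-y_i|^{-(n-s)\alpha_i}$ with $\alpha_i\in(0,1/(n-s))$ and $\sum_i\alpha_i=1$, thereby reducing matters to an iterated convolution against one-dimensional fractional kernels that can be absorbed through the mixed structure line by line via the Lorentz-space boundedness of one-dimensional Riesz potentials.
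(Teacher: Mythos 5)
Your argument is correct and rests on the same two-step skeleton as the paper's proof: a H\"older inequality in mixed Lorentz spaces to peel off the weight, followed by the mixed-Lorentz Sobolev embedding $\|f\|_{L^{\overrightarrow{c},\overrightarrow{q}}}\leq C\|\Lambda^{s}f\|_{L^{\overrightarrow{p},\overrightarrow{q}}}$ with $\sum_{i}(1/p_{i}-1/c_{i})=s$. The differences lie in the two places you elaborate. First, the paper keeps the isotropic weight, asserts $|x|^{-s}\in L^{\frac{n}{s},\infty}$ in the mixed sense (all primary exponents equal to $n/s$), and so charges each coordinate with exactly $s/n$ of the singularity, giving $1/p_{i}^{\ast}=1/p_{i}-s/n$; your weighted AM--GM tensorization with $sw_{i}=\lambda/p_{i}$ distributes the singularity non-uniformly according to $\overrightarrow{p}$. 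This is a genuine refinement: it needs only $\lambda<1$, i.e.\ $s<\sum_{i}1/p_{i}$, which is strictly weaker than the stated hypothesis $s<n/p_{i}$ for every $i$, so your route proves a slightly more general statement --- and it is also the cleanest justification of the paper's unproved membership claim for $|x|^{-s}$, being precisely the equal-weight case of the ``critical observation'' the authors invoke for Theorem \ref{the1.2}. Second, the paper cites the mixed Sobolev inequality \eqref{sl} as a black box from \cite{[Blozinski],[KT]}, whereas you propose to rebuild it from the kernel domination $|x-y|^{-(n-s)}\leq C\prod_{i}|x_{i}-y_{i}|^{-(n-s)\alpha_{i}}$ and iterated one-dimensional O'Neil--Young inequalities; with $\alpha_{i}=(1-sw_{i})/(n-s)$ the exponents match ($\sum_{i}\alpha_{i}=1$ and $1/c_{i}=1/p_{i}+(n-s)\alpha_{i}-1=1/p_{i}-sw_{i}$), and the line-by-line absorption goes through by Minkowski's inequality in Lorentz norms since $q_{i}>1$. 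That costs extra work but makes the proof self-contained. Two small points to tidy: the hypothesis $s<n/p_{i}$ for all $i$ \emph{implies}, but is not equivalent to, $\sum_{i}1/(sp_{i})>1$; and you should record that $a_{i}=p_{i}/\lambda>1$ and $p_{i}<c_{i}=p_{i}/(1-\lambda)<\infty$, so the H\"older and Sobolev lemmas apply in their stated ranges.
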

\begin{remark}
This theorem covers the known Hardy-Sobolev inequalities \eqref{hardy}-\eqref{HSL}. When $p_{i}=q_{i}$, the inequality \eqref{HSAL} reduces to
\be
\B\|\f{f}{|x|^{s}}\B\|_{L^{\overrightarrow{p} }(\mathbb{R}^{n})}\leq C
 \|\Lambda^{s}f\|_{L^{\overrightarrow{p} }(\mathbb{R}^{n})},
\ee
which still extends  the classical results  \eqref{hardy} and \eqref{HSF}.
\end{remark}
 To
illustrate the proof of the Hardy-Sobolev inequality \eqref{HSAL},
we give some comments on the role of  Lorentz spaces in the study of Hardy-Littlewood-Sobolev inequality and  Hardy-Sobolev  inequality in usual Lebesgue spaces.
Indeed, making full use of the fact $|x|^{-s}\in L^{\f{n}{s},\infty}(\mathbb{R}^{n})$ and the Young inequality in Lorentz spaces in \cite{[Neil]}, one can conclude the  classical Hardy-Littlewood-Sobolev inequality that
 $$
\B\|f\ast \f{1}{|x|^{n-s}}\B\|_{L^{\f{qn}{n-sq} }(\mathbb{R}^{n})} \leq C\B\|f\ast \f{1}{|x|^{n-s}}\B\|_{L^{\f{qn}{n-sq},q}(\mathbb{R}^{n})}   \leq C  \|f\|_{L^{q  }(\mathbb{R}^{n})}, 1<q<\f{n}{s}, 0<s<n,
 $$
 which also means that
\be\label{HLS2}
 \B\|f \B\|_{L^{\f{qn}{n-sq},q }(\mathbb{R}^{n})} \leq C\|\Lambda^{s}f\|_{L^{q  }(\mathbb{R}^{n})}, 1<q<\f{n}{s}.
 \ee
On the other hand, replacing the Young inequality by the H\"older inequality in    Lorentz spaces and using the  Sobolev inequality \eqref{HLS2}, one may derive the Hardy-Sobolev type inequality \eqref{HSL}.
This strategy will also be applied to prove \eqref{HSAL}. To this end, we observe that the Hardy-Sobolev inequality \eqref{HSAL} can be enhanced to the inequality
$$
\B\|\f{f(x)}{\prod_{i=1}^{n}|x_{i}|^{\f{s}{n}}}\B\|_{L^{\overrightarrow{p} ,\overrightarrow{q}}(\mathbb{R}^{n})}
\leq  C
 \|\Lambda^{s}f\|_{L^{\overrightarrow{p},\overrightarrow{q}} (\mathbb{R}^{n})}.
$$
This enlightens us to consider the     Hardy-Sobolev type inequality in anisotropic cases. It is worth remarking that this kind of anisotropic inequalities is very helpful to investigate the  cylindrical (axial symmetry) solutions of  partial differential equations.
  To consider the nonlinear elliptic equations
arising in astrophysics,  Badiale-Tarantello \cite{[BT]} first introduced the following  anisotropic Hardy-Sobolev inequality
\be\label{BTHSI}
\B\|r_{k}^{ -\f{s(n-q)}{2q(n-s)}}  f \B\|_{L^{\f{2q(n-s)}{n-q}}(\mathbb{R}^{n})}
\leq C\|\nabla f\|_{L^{q}(\mathbb{R}^{n})}, s<k, 2 \leq k\leq n,
\ee
where $r_{k}=\sqrt{x_{1}^{2}+x_{2}^{2}+\cdots+x_{k}^{2}}$.

Chen-Fang-Zhang \cite{[CFZ]}   proved the following generalization of Hardy-Sobolev inequality  \eqref{BTHSI}
\be\label{cfz}
\B\|r^{ {-\f{ s}{q }}}_{  { k}} f \B\|_{L^{q}(\mathbb{R}^{n})}
\leq C\|  f\|^{ \f{n-s}{q}-\f{n}{p}+1}_{L^{p}(\mathbb{R}^{n})}\|\nabla f\|^{ \f{n}{p}-\f{n-s}{q}}_{L^{p}(\mathbb{R}^{n})}, s<k, 2 \leq k\leq n,
\ee
and
employed it to consider the regularity of the three-dimensional axial symmetry  Navier-Stokes system.
Subsequently, the anisotropic Hardy-Sobolev inequality below
\begin{equation}\label{yu}
\Big\|\frac{f}{|r_2|^{1-\alpha}|x_{3}|^{\alpha}}
\Big\|_{L^p(\mathbb{R}^3)}\leq C\|\Lambda^\alpha_{x_{3}}\Lambda_{x_{1},x_{2}}^{1-\alpha}f\|_{L^p(\mathbb{R}^3)}\leq C  \|\nabla  f\|_{L^p(\mathbb{R}^3)}, 0<\alpha<1,
\end{equation}
was  proved and applied to investigate the regularity of axial
symmetry Navier-Stokes equations  by      Yu in  \cite{[Yu]}.
 Partially motivated by this and the proof of Theorem \ref{the1.1},  the second target of this paper is to further extend the aforementioned  Hardy-Sobolev inequality as follows.
\begin{theorem}\label{the1.2}
Let $\sum_{j=1}^{i}k_{j}\leq n$ and $k_{j}\in \mathbb{Z}^{+},1\leq j\leq i$.
Take any  $k_{1}$ elements from the set  $A_{0}=\{x_{1},x_{2},\cdots,x_{n}\}$
and denote $A_{1}=\{x_{i_{1}},x_{i_{2}},\cdots,x_{i_{k_{1}}}\}$,
  $r_{1}=\sqrt{x_{i_{1}}^{2}+x_{i_{2}}^{2}+\cdots+x^{2}_{i_{k_{1}}}}$.
Then choose any $k_{2}$ elements from the complementary set  $A_{0}\setminus A_{1}$  and  write $A_{2}=\{x_{i_{k_{1}+1}},x_{i_{k_{1}+2}},\cdots,x_{i_{k_{1}+k_{2}}}\}$,
 $r_{2}=\sqrt{x_{i_{k_{1}+1}}^{2}+x_{i_{k_{1}+2}}^{2}+\cdots+x^{2}_{i_{k_{1}+k_{2}}}}$.
 Continue until we take any $k_{i}$ elements from the complementary set  $A_{0}\setminus \bigcup_{j=1}^{i-1} A_{j}$  and
 denote $A_{i}=\{x_{i_{\Sigma_{j=1}^{i-1}k_{j}+1}},x_{i_{\Sigma_{j=1}^{i-1}k_{j}+2}},\cdots,x_{i_{\Sigma_{j=1}^{i}k_{j}}}\}$, $r_{i}=\sqrt{x_{i_{\Sigma_{j=1}^{i-1}k_{j}+1}}^{2}
 +x_{i_{\Sigma_{j=1}^{i-1}k_{j}+2}}^{2}
 +\cdots+x^{2}_{i_{\Sigma_{j=1}^{i}k_{j}}}}$.
Suppose that $1<p<\infty$ and $0<\alpha_{j}<\f{k_{j}}{p}$, $ 1\leq j\leq i$. Then for all $f\in C_{0}^{\infty}(\mathbb{R}^{n})$, there holds
\be\label{anHS}
\B\|\f{f(x_{1},x_{2},\cdots,x_{n})}{\prod^{i}_{j=1}|r_{j}|^{\alpha_{j}}}
\B\|_{L^{p}(\mathbb{R}^{n})}
\leq C\B\|\Lambda_{x_{i_{1}},\cdots,x_{i_{\Sigma_{j=1}^{i}k_{j}}}}^{\sum^{i}_{j=1}\alpha_{j}}f
\B\|_{L^{p}(\mathbb{R}^{n})}.
\ee
\end{theorem}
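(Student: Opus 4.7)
\noindent The plan is to mirror the scheme already used by Yu \cite{[Yu]} for the three-dimensional anisotropic inequality \eqref{yu} and alluded to by the authors just before the statement. Since the weight $\prod_{j=1}^{i}|r_j|^{\alpha_j}$ decouples across the pairwise disjoint variable groups $A_j$, and each factor $|r_j|^{-\alpha_j}$ lies in $L^{k_j/\alpha_j,\infty}(\mathbb{R}^{k_j})$ when viewed as a function of the $k_j$ variables in $A_j$, I will peel off the factors one group at a time using the classical Hardy-Sobolev inequality \eqref{HSF} in $\mathbb{R}^{k_j}$; the assumption $0<\alpha_j<k_j/p$ is precisely what is needed for \eqref{HSF} in dimension $k_j$. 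The resulting fractional derivatives $\Lambda^{\alpha_j}_{A_j}$, $j=1,\ldots,i$, then have to be reassembled into the single operator on the right-hand side of \eqref{anHS}.

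More concretely, for the first pass I freeze all variables outside $A_1$ and apply \eqref{HSF} in the $A_1$-variables to each slice of $f$, obtaining the pointwise-in-the-other-variables bound
\begin{equation*}
\B\|\f{f}{|r_1|^{\alpha_1}}\B\|_{L^p_{x_{A_1}}}\le C\,\|\Lambda^{\alpha_1}_{A_1}f\|_{L^p_{x_{A_1}}}.
\end{equation*}
Raising to the $p$-th power and using Fubini---noting that the remaining factors $|r_j|^{-\alpha_j}$ with $j\neq 1$ are constant in $x_{A_1}$ and may be pulled out of the inner integral---converts this into a full $L^p(\mathbb{R}^n)$ estimate in which $|r_1|^{\alpha_1}$ has been replaced by $\Lambda^{\alpha_1}_{A_1}$ acting on $f$. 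Iterating the same procedure on $A_2,\ldots,A_i$ and using that the operators $\{\Lambda^{\alpha_j}_{A_j}\}_{j=1}^i$ pairwise commute (they are fractional Laplacians in disjoint groups of variables), I arrive at
\begin{equation*}
\B\|\f{f}{\prod_{j=1}^i |r_j|^{\alpha_j}}\B\|_{L^p(\mathbb{R}^n)}\le C\,\B\|\prod_{j=1}^i \Lambda^{\alpha_j}_{A_j}f\B\|_{L^p(\mathbb{R}^n)}.
\end{equation*}

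The remaining step, which I expect to be the main obstacle, is the Fourier-multiplier comparison $\|\prod_{j=1}^i\Lambda^{\alpha_j}_{A_j}f\|_{L^p(\mathbb{R}^n)}\le C\,\|\Lambda^{s}_{A} f\|_{L^p(\mathbb{R}^n)}$, with $A=\bigcup_{j=1}^{i}A_j$ and $s=\sum_{j=1}^{i}\alpha_j$. The associated symbol $m(\xi)=\prod_{j=1}^{i}(|\xi_{A_j}|/|\xi_A|)^{\alpha_j}$ is zero-homogeneous and, by weighted AM-GM, pointwise bounded by $1$, which trivially gives the $L^2$ bound. For general $p$, however, $m$ is not smooth on the coordinate subspaces $\xi_{A_j}=0$, so Mihlin's theorem is not directly available. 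I plan to handle this via a product (multi-parameter) Littlewood-Paley decomposition: localize Fourier support to dyadic shells $\{|\xi_{A_j}|\sim 2^{l_j}\}$ in each group, on which $m$ is smooth and satisfies Mihlin-type derivative bounds, and reassemble through the product square-function inequality, valid in the whole range $1<p<\infty$. A cleaner alternative is Stein's complex interpolation on the analytic family $\prod_{j=1}^i(|\xi_{A_j}|^2/|\xi_A|^2)^{z_j/2}$ between $z_j=0$ (the identity) and even positive integers $z_j$ (finite compositions of Riesz transforms in the $A$-variables).
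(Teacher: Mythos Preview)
Your iteration of the block Hardy--Sobolev inequality \eqref{HSF} is valid and does yield the bound by $\bigl\|\prod_{j}\Lambda^{\alpha_j}_{A_j}f\bigr\|_{L^p}$, but the step you then face---the $L^p$ boundedness of the zero-homogeneous multiplier $\prod_j(|\xi_{A_j}|/|\xi_A|)^{\alpha_j}$---is, as you correctly flag, the real work, and neither of your proposed fixes (a multi-parameter Littlewood--Paley square function or multi-variable Stein interpolation) is entirely routine, though both can be made to go through. The paper sidesteps this multiplier problem completely by a different and more elementary device: instead of invoking \eqref{HSF} in each $\mathbb{R}^{k_j}$, it first uses the trivial pointwise bound $r_j\ge |x_{i_\ell}|$ for each coordinate $x_{i_\ell}\in A_j$ to dominate $r_j^{-\alpha_j}$ by the product $\prod_{\ell=1}^{k_j}|x_{i_\ell}|^{-\alpha_j/k_j}$. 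The full weight then becomes a tensor of one-dimensional power weights and sits in the mixed Lorentz space $L^{k_1/\alpha_1,\infty}(\mathbb{R}^{k_1})\cdots L^{k_i/\alpha_i,\infty}(\mathbb{R}^{k_i})$. A single application of the H\"older inequality \eqref{HolderIQ} in mixed Lorentz spaces, followed by the mixed-norm Sobolev inequality \eqref{sl}, lands directly on $\bigl\|\Lambda^{\sum_j\alpha_j}_{A}f\bigr\|_{L^p}$, with no intermediate product of partial fractional Laplacians and hence no multiplier comparison at all. Your route keeps the block structure visible and would also prove the slightly sharper intermediate inequality with $\prod_j\Lambda^{\alpha_j}_{A_j}$ on the right, but the paper's route is shorter and uses only off-the-shelf mixed-norm inequalities already recorded in Section~\ref{section2}.
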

\begin{remark}
The  Mihlin-H\"ormander multipliers theorem on $L^p(\mathbb{R}^{n})$   guarantees that
\be\B\|\Lambda_{x_{i_{1}},\cdots,x_{i_{\Sigma_{j=1}^{i}k_{j}}}}^{\sum^{i}_{j=1}\alpha_{j}}f
\B\|_{L^{p}(\mathbb{R}^{n})}
\leq C\|\Lambda^{\sum^{i}_{j=1}\alpha_{j}}f\|_{L^{p}(\mathbb{R}^{n})}.
\label{MH}\ee
Therefore, one can replace  $\B\|\Lambda_{x_{i_{1}},\cdots,x_{i_{\Sigma_{j=1}^{i}k_{j}}}}^{\sum^{i}_{j=1}\alpha_{j}}f
\B\|_{L^{p}(\mathbb{R}^{n})}$  by  $ \|\Lambda^{\alpha_{1}
+\cdots+\alpha_{i}}f\|_{L^{p}(\mathbb{R}^{n})}$ in inequality  \eqref{anHS}.
\end{remark}
\begin{remark}
Setting $i=j=1$  and choosing $k_{1}=k$ and $A_{1}=\{x_{1},x_{2},\cdots,x_{k}\}$ in this theorem, one may derive from \eqref{anHS} and  \eqref{MH} that
$$
\B\|r_{k}^{-\f{ s}{q }}f \B\|_{L^{q}(\mathbb{R}^{n})}
\leq C   \| \Lambda ^{\f{ s}{q }} f \|_{L^{q}(\mathbb{R}^{n})}  \leq C\|  f\|^{ \f{n-s}{q}-\f{n}{p}+1}_{L^{p}(\mathbb{R}^{n})}\|\nabla f\|^{ \f{n}{p}-\f{n-s}{q}}_{L^{p}(\mathbb{R}^{n})}, s<k, 1 \leq k\leq n,
$$
where the    Gagliardo-Nirenberg inequality was used.
It is worth remarking that
this inequality implies Badiale-Tarantello's inequality \eqref{BTHSI}  and Chen-Fang-Zhang's inequality \eqref{cfz}.
\end{remark}
\begin{remark}
Taking $A_{1}=\{x_{1},x_{2}\}, A_{2}=\{x_{3}\}, \alpha_{1}=1-\alpha$, $ \alpha_{2}=\alpha$ in this theorem, one     has
$$
\Big\|\frac{f}{|r_1|^{1-\alpha}|x_{3}|^{\alpha}}
\Big\|_{L^p(\mathbb{R}^3)} \leq C  \|\nabla  f\|_{L^p(\mathbb{R}^3)},~\max\{1-\frac{2}{p},0\}<\alpha<\frac{1}{p},~p>1,
$$
which is a variant of Yu's inequality \eqref{yu}.
\end{remark}
 By a slightly modified proof of Theorem \ref{the1.2}, we can further obtain the following result.
\begin{coro}\label{coro}
Suppose that $\mathbb{R}^{n}=\mathbb{R}^{ k_{1}}\times\mathbb{R}^{ k_{2}}\times\cdots\times\mathbb{R}^{ k_{i}}\times\mathbb{R}^{ n-\sum_{j=1}^{i}k_{j}}$ and
$n\geq\sum_{j=1}^{i}k_{j}$.
Let
$r_{1}=\sqrt{x^{2}_{1}+x^{2}_{2}+\cdots+x^{2}_{k_{1}}}$,
$r_{2}=\sqrt{x^{2}_{k_{1}+1}+x^{2}_{k_{1}+2}+\cdots+x^{2}_{k_{1}+k_{2}}}$,
$\cdots$, $r_{i}=\sqrt{x_{\Sigma_{j=1}^{i-1}k_{j}+1}^{2}
 +x_{\Sigma_{j=1}^{i-1}k_{j}+2}^{2}
 +\cdots+x^{2}_{\Sigma_{j=1}^{i}k_{j}}}$,  $0<p,q\leq\infty$,  $1<(\overrightarrow{p_{j}})_{\ell}<\infty$, $1\leq(\overrightarrow{q_{j}})_{\ell}\leq\infty$ and $0<\alpha_{j}<\f{k_{j}}{ (\overrightarrow{p_{j}})_{\ell}}$, $1\leq j\leq i$,    $1\leq\ell\leq k_{j}$. Then for all $f\in C_{0}^{\infty}(\mathbb{R}^{n})$, there holds
\be\ba\label{anHS1}
&\B\|\f{f(x_{1},x_{2},\cdots,x_{n})}{\prod^{i}_{j=1}|r_{j}|^{\alpha_{j}}}
\B\|_{L^{\overrightarrow{p_{1}},\overrightarrow{q_{1}}}
(\mathbb{R}^{  k_{1}})\cdots L^{\overrightarrow{p_{i}},\overrightarrow{q_{i}}}(\mathbb{R}^{ k_{i}})L^{p,q}(\mathbb{R}^{ n-\sum_{j=1}^{i}k_{j}})}\\
\leq& C\B\|\Lambda_{x_{1},\cdots,x_{\Sigma^{i}_{j=1}k_{j}}}^{\sum^{i}_{j=1}\alpha_{j}}f
\B\|_{L^{\overrightarrow{p_{1}},\overrightarrow{q_{1}}}
(\mathbb{R}^{  k_{1}})\cdots L^{\overrightarrow{p_{i}},\overrightarrow{q_{i}}}(\mathbb{R}^{ k_{i}})L^{p,q}(\mathbb{R}^{ n-\sum_{j=1}^{i}k_{j}})}.
\ea \ee
\end{coro}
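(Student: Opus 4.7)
The plan is to adapt the proof of Theorem~\ref{the1.2} from the single $L^p(\mathbb{R}^n)$ setting to the iterated mixed Lorentz setting of \eqref{anHS1}. The main ingredients remain the same: (i) $|r_j|^{-\alpha_j}$ belongs to the weak Lorentz space $L^{k_j/\alpha_j,\infty}(\mathbb{R}^{k_j})$; (ii) writing $s=\sum_{j=1}^{i}\alpha_j$ and $\Sigma=\sum_{j=1}^{i}k_j$, one has the Riesz potential representation $f=I_s g$ in the first $\Sigma$ variables, where $g=\Lambda^{s}_{x_1,\ldots,x_\Sigma}f$; and (iii) O'Neil's H\"older and Young inequalities in Lorentz spaces, now read in their mixed form.

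First I would observe that the outer factor $L^{p,q}(\mathbb{R}^{n-\Sigma})$ carries neither a weight nor a derivative, so after an application of Minkowski in Lorentz spaces it is harmless: it suffices to establish the inequality with the outer $L^{p,q}$ norm suppressed, treating the remaining $\mathbb{R}^\Sigma$ variables by iterated mixed Lorentz norms. The core argument then mirrors the proof of Theorem~\ref{the1.2}. Represent $f$ via the Riesz potential in the first $\Sigma$ coordinates and split the kernel $|y|^{-(\Sigma-s)}$ into contributions localized to each coordinate group --- either through the AM-GM type bound $|(y^{(1)},\ldots,y^{(i)})|^{-(\Sigma-s)}\le C\prod_{j=1}^{i}|y^{(j)}|^{-(k_j-\alpha_j)}$, or by applying a one-group Hardy-Littlewood-Sobolev step successively. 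Within each group one imports $|r_j|^{-\alpha_j}$ by H\"older in the mixed Lorentz norm on $\mathbb{R}^{k_j}$, paired against an intermediate mixed Lorentz norm of the partial Riesz potential of $g$, exactly as in Theorem~\ref{the1.1}. Iterating this procedure from $j=1$ up to $j=i$, and closing the chain with the mixed Lorentz Sobolev embedding \eqref{HLS2} in the appropriate $\mathbb{R}^{k_j}$, recovers the target estimate.

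The principal obstacle will be to choreograph the iterated Minkowski exchange between the different factor Lorentz spaces: each group's Hardy-Sobolev step requires moving the $j$th mixed Lorentz norm past the outer ones, which is legitimate because the assumption $1<(\overrightarrow{p_j})_\ell<\infty$ keeps every component exponent in the reflexive range. A secondary but delicate point is bookkeeping for the exponents: one must verify that the intermediate Lorentz exponents produced by H\"older against $|r_j|^{-\alpha_j}\in L^{k_j/\alpha_j,\infty}(\mathbb{R}^{k_j})$ remain inside $(1,\infty)$ at every stage, and this is exactly what the hypothesis $0<\alpha_j<k_j/(\overrightarrow{p_j})_\ell$ ensures. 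Once these two bookkeeping checks are performed, the rest of the argument is a direct transcription of the proof of Theorem~\ref{the1.2}.
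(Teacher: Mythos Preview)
Your plan diverges from the paper's and introduces complications that leave a genuine gap. The paper's proof of the corollary is a two-line transcription of the proof of Theorem~\ref{the1.2}: it first uses the pointwise bound $r_j\ge |x_{\sum_{m<j}k_m+\ell}|$ to replace the weight $\prod_j r_j^{-\alpha_j}$ by the full tensor product $\prod_j\prod_{\ell=1}^{k_j} |x_{\sum_{m<j}k_m+\ell}|^{-\alpha_j/k_j}$, which belongs to the \emph{iterated} Lorentz space $L^{k_1/\alpha_1,\infty}(\mathbb{R}^{k_1})\cdots L^{k_i/\alpha_i,\infty}(\mathbb{R}^{k_i})$ simply because each factor is a one-variable weak-$L^{k_j/\alpha_j}$ function. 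Then one application of the mixed-Lorentz H\"older inequality~\eqref{HolderIQ} and one application of the mixed-Lorentz Sobolev inequality~\eqref{sl} (with $\frac{1}{\overrightarrow{p_j^{\ast}}}=\frac{1}{\overrightarrow{p_j}}-\frac{\alpha_j}{k_j}$) finish the argument. There is no Riesz potential representation, no kernel splitting, and no Minkowski exchange.

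Your proposal has two concrete problems. First, you invoke $|r_j|^{-\alpha_j}\in L^{k_j/\alpha_j,\infty}(\mathbb{R}^{k_j})$ in the \emph{isotropic} weak sense, but the H\"older step you actually need is coordinate-by-coordinate inside the mixed Lorentz norm $L^{\overrightarrow{p_j},\overrightarrow{q_j}}(\mathbb{R}^{k_j})$; membership in the isotropic weak space does not immediately give membership in the iterated one, and you do not address this. The paper's pointwise tensor-product bound is exactly the device that sidesteps this issue. Second, the ``iterated Minkowski exchange between the different factor Lorentz spaces'' that you rely on is not guaranteed merely because the primary exponents lie in $(1,\infty)$: exchanging Lorentz norms also depends on the secondary exponents, and here $(\overrightarrow{q_j})_\ell$ ranges over all of $[1,\infty]$. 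The paper avoids any such exchange by applying H\"older and Sobolev once, globally in the full mixed norm. Your Riesz-kernel manipulations amount to reproving~\eqref{sl} from scratch, which is unnecessary since it is already listed as a tool.
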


Next, we present an application of    Hardy-Sobolev   inequalities  obtained above to the axisymmetric Navier-Stokes equations. In what follows, for the convenience of presentation, we set $r=r_{2}=\sqrt{x^{2}_{1}+x^{2}_{2}}$ in $\mathbb{R}^{3}$.
\subsection{An application of Hardy-Sobolev   inequality   to the Navier-Stokes equations}
The three-dimensional axisymmetric Navier-Stokes equations   can be written in the cylindrical coordinates
\be\left\{\ba\label{ANS}
&\partial_{t}u_{r}+(u_{r}\partial_{r} +u_{x_{3}}\partial_{x_{3}} )u_{r}-(\partial_r^2+\partial_z^2+\frac{1}{r}\partial_r)u_r+\frac{1}{r^2}u_r-\frac{1}{r}(u_\theta)^2+\partial_rp=0, \\
&\partial_{t}u_{\theta}+(u_{r}\partial_{r} +u_{x_{3}}\partial_{x_{3}} )u_{\theta}-(\partial_r^2+\partial_z^2+\frac{1}{r}\partial_r)u_\theta+\frac{1}{r^2}u_\theta
+\frac{1}{r}u_\theta u_r=0, \\
&\partial_{t}u_{x_{3}}+(u_{r}\partial_{r} +u_{x_{3}}\partial_{x_{3}} )u_{x_{3}}-(\partial_r^2+\partial_{x_{3}}^2+\frac{1}{r}\partial_r)u_{x_{3}}+\partial_{x_{3}}p=0, \\
&\partial_r(ru_r)+\partial_{x_{3}}(ru_{x_{3}})=0,\\
&u_r|_{t=0}=u_{r}^0,~u_\theta|_{t=0}=u_{\theta}^0,~u_{x_{3}}|_{t=0}=u_{x_{3}}^0,
\ea\right.\ee
where, $u_r, u_\theta$ and $u_{x_{3}}$   denote the     radial  component, swirl component, vertical component of the velocity $u$,  respectively. It is valid that $u=u_r e_{r}+u_\theta e_{\theta}+u_{x_{3}} e_{x_{3}}$, where $e_{r}=(\f{x_{1}}{r},\f{x_{2}}{r},0)$, $e_{\theta}=(-\f{x_{2}}{r},\f{x_{1}}{r},0)$ and
$e_{x_{3}}=(0,0,1)$.
Let $\omega_r=-\partial_{x_{3}} u_\theta,$ $\omega_\theta=\partial_{x_{3}} u_r-\partial_r u_{x_{3}},$ $\omega_{x_{3}}=\frac{\partial_r (ru_\theta)}{r},$ then, there holds
$\omega=$curl$ u$$=\omega_re_r+\omega_\theta e_\theta+\omega_{x_{3}}e_{x_{3}}$.
The equations of $(\omega_r,\omega_\theta , \omega_{x_{3}})$ is determined by
\begin{eqnarray*}
	&&\partial_{t}\omega_r+(u_{r}\partial_{r} +u_{x_{3}}\partial_{x_{3}} )\omega_r -(\partial_r^2+\partial_{x_{3}}^2+\frac{1}{r}\partial_r)\omega_r+\frac{1}{r^2}\omega_r-(\omega_r\partial_r+\omega_{x_{3}}\partial_{x_{3}})u_r=0,\\[3mm]
	&& \partial_{t}\omega_\theta+(u_{r}\partial_{r} +u_{x_{3}}\partial_{x_{3}} )\omega_\theta-(\partial_r^2+\partial_{x_{3}}^2+\frac{1}{r}\partial_r)\omega_\theta+\frac{1}{r^2}\omega_\theta-\frac 2r u_\theta\partial_{x_{3}}u_\theta
	-\frac{1}{r}\omega_\theta u_r=0,\\[3mm]
	&&\partial_{t}\omega_{x_{3}}+(u_{r}\partial_{r} +u_{x_{3}}\partial_{x_{3}} )\omega_{x_{3}}
-(\partial_r^2+\partial_{x_{3}}^2+\frac{1}{r}\partial_r)\omega_{x_{3}}-(\omega_r\partial_r+\omega_{x_{3}}\partial_{x_{3}})u_{x_{3}}=0.
\end{eqnarray*}
For  the 3D axisymmetric Navier-Stokes equations \eqref{ANS} without
swirl ($u_{\theta}=0$),
it is well-known that  the global well-posedness was established independently by  Ukhovskii and Yudovich \cite{[UY]} and  Ladyzhenskaya
\cite{[L]}. See also \cite{[LMNP]} for a refined proof.
However, in the presence of swirl, the global well-posedness problem is still open.  By Caffarelli, Kohn, Nirenberg \cite{[CKN]}, the problem is reduced to how to remove the possible singularities on the symmetry axis.  Recently, using DeGeogi-Nash-Moser iterations and a blow-up approach respectively,  Chen-Strain-Tsai-Yau \cite{CSTY1,CSTY2} and Koch-Nadirashvili-Seregin-\v{S}ver\'ak  \cite{KNSS} obtained an interesting and important development on this problem. They proved that if the solution satisfies $(1) \ |ru(x,t)|\le C$ or $(2)\ |u(x,t)|\le \frac{C}{\sqrt{T^*-t}}$ for $0<t<T^*$, where $C>0$ is an arbitrary and absolute constant and $(0,T^*)$ is the maximal existence interval of the solution, then  the solution is globally regular. It should be remarked that these conditions are scaling invariant and imply the possible blow-up rate of the solution.
See also  various extensions by Pan \cite{[P]} and Lei and Zhang \cite{[LZ]}.

Since $ru_\theta$ is a scaling invariant
quantity, it is natural and  important to
consider the critical regularity conditions for $u_\theta$ for the axisymmetric Navier-Stokes
equations. The regularity criteria for $u_\theta$  are derived in \cite{[NP],[KP],[KPZ]}
A scaling invariant (Serrin type) regularity condition  based on $u_{\theta}$ is due to Zhang-Zhang \cite{[ZZ]}, where it is shown that the condition
$u_{\theta}\in L^{4}(0,T;L^{6}(\mathbb{R}^{3}))$
guarantees the regularity of solutions of the Navier-Stokes equations \eqref{ANS}.
Recently, Chen,   Fang and   Zhang \cite{[CFZ]}  showed that the solutions of   the Navier-Stokes equations \eqref{ANS} is regular on $(0,T]$ provided
\be\label{CFZ}
r^{d}u_{\theta}\in L^{q}(0,T; L^{p}(\mathbb{R}^{3})), \f2q+\f3p=1-d, 0\leq d<1, \f{3}{1-d}<p\leq\infty, \f{2}{1-d}\leq q\leq\infty,
\ee
or $ r^{d}u_{\theta}\in L^{\infty}(0,T; L^{\f{3}{1-d}}(\mathbb{R}^{3}))$ and its norm sufficiently small. The above regularity criteria of $u_{\theta}$, which is scaling invariant, develop the corresponding results \cite{[NP],[KP],[KPZ],[ZZ]}. The  key points are
 the general anisotropic Hardy-Sobolev inequality and combinations of the equations satisfied by $\Gamma=\frac{\omega_\theta}{r}$ and $\Phi=\frac{\omega_r}{r}$, i.e.
\be\left\{\ba\label{w}
&\partial_t \Gamma+u \cdot\nabla \Gamma-(\Delta+\frac2r \partial_r)\Gamma+2\frac{u_\theta}{r}\Phi=0, \\
&\partial_t \Phi+u \cdot\nabla \Phi-(\Delta+\frac2r \partial_r)\Phi-(\omega_r\partial_r+\omega_{x_{3}}\partial_{x_{3}})\frac{u_r}{r}=0.
\ea\right.\ee
 Very recently,  Chen,   Fang and   Zhang \cite{[CFZ1]}  proved that the lifespan  condition \eqref{CFZ} can be replaced by
\be\label{cfz2}
r^{\alpha}u^{\theta}\in L^{q}(0,T; L_{x_{3}}^{p_{3}}(\mathbb{R}  )L_{x_{2},x_{1}}^{p_{2},\infty}(\mathbb{R}^{2}  ) ), \f2q+\f1p_{3}+\f2p_{2} =1-\alpha, 0\leq \alpha<\f12,
\ee
Let  us also mention that Yu \cite{[Yu]} recently showed that
$|x_{3}|u_{\theta}\in L^{\infty}(0,T;  L^{\infty}(\mathbb{R}^{3})) $
and the sufficiently small norm of  $\||x_{3}|u_{\theta}\|_{L^{\infty} L^{\infty}}$ ensure the regularity of solutions of axisymmetric Navier-Stokes equations
with the help of    Hardy-Sobolev  inequality \eqref{yu}.
The other regularity criteria and recent studies  can be seen in \cite{[LZ2],[Wei],[CL],[CTZ]} and references therein.

 We now state our main theorems.  We firstly invoke the Hardy-Sobolev inequality \eqref{anHS} to generalize Yu's regularity class in \cite{[Yu]}. Our results can be formulated as
\begin{theorem}\label{the1.4}
	Suppose that $u$ be an axisymmetric weak solution of the Navier-Stokes system \eqref{ANS}
	associated with the axisymmetric divergence-free initial data $u_0 \in H^2(\mathbb{R}^3)$. If
	\be\label{WWY1}\ba
	&|x_{3}|^{\alpha}u_{\theta}\in L^{q}(0,T; L^{p}(\mathbb{R}^{3})),\\
	& \f2q+\f3p=1-\alpha, 0\leq \alpha<\f14, \f{3}{1-\alpha}<p\leq\infty, \f{2}{1-\alpha}\leq q<\infty.
	\ea\ee
	or
	$u_{\theta} |x_{3}|^{\alpha}\in L^{\infty}(0,T;L^{\f{3}{1-\alpha}}(\mathbb{R}^{3}))$ and  the norm of     $\|u_{\theta} |x_{3}|^{\alpha}\|_{L^{\infty}(0,T;L^{\f{3}{1-\alpha}}(\mathbb{R}^{3}))} $ is sufficiently small,
	then $u$ is smooth in $(0, T] \times\mathbb{R}^3$.
\end{theorem}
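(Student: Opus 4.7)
The plan is to follow the vorticity approach of Chen-Fang-Zhang \cite{[CFZ],[CFZ1]} on the coupled system \eqref{w} for $(\Gamma,\Phi)=(\omega_\theta/r,\omega_r/r)$, substituting the cylindrical weight $r^d$ used there by the vertical weight $|x_3|^\alpha$ through the new anisotropic Hardy-Sobolev inequality from Theorem \ref{the1.2}. Concretely, I would test the $\Gamma$-equation by $|\Gamma|^{a-2}\Gamma$ and the $\Phi$-equation by $|\Phi|^{b-2}\Phi$ for suitable exponents $a,b$ matched to the integrability exponent $p$ and the scaling $\f{2}{q}+\f{3}{p}=1-\alpha$. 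The convection terms vanish since $\s u=0$, and because $\Delta+\f{2}{r}\partial_r$ is the five-dimensional Laplacian acting on axisymmetric functions, the dissipation yields the positive definite quantities $c_a\|\nabla|\Gamma|^{a/2}\|_{L^2}^2$ and $c_b\|\nabla|\Phi|^{b/2}\|_{L^2}^2$.

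The crucial step is to control the coupling
$$2\int_{\R^3}\f{u_\theta}{r}\,\Phi\,|\Gamma|^{a-2}\Gamma\,dx$$
(together with the vortex-stretching term $\int(\omega_r\partial_r+\omega_{x_3}\partial_{x_3})\f{u_r}{r}\cdot|\Phi|^{b-2}\Phi\,dx$ in the $\Phi$-equation, which is handled by integration by parts and the axisymmetric Biot-Savart law as in \cite{[CFZ]}). Writing $\f{u_\theta}{r}=\f{|x_3|^\alpha u_\theta}{r\,|x_3|^\alpha}$ and applying H\"older's inequality with exponents $p,p'$, the task reduces to estimating
$$\B\|\f{|\Phi|\,|\Gamma|^{a-1}}{r\,|x_3|^\alpha}\B\|_{L^{p'}(\R^3)}.$$
Invoking Theorem \ref{the1.2} in $\R^3$ with $i=2$, $A_1=\{x_1,x_2\}$, $A_2=\{x_3\}$, $k_1=2$, $k_2=1$, $\alpha_1=1$, $\alpha_2=\alpha$ yields
$$\B\|\f{g}{r\,|x_3|^\alpha}\B\|_{L^{p'}(\R^3)}\le C\,\|\Lambda^{1+\alpha}g\|_{L^{p'}(\R^3)},$$
subject to the admissibility constraints $1<2/p'$ and $\alpha<1/p'$; combined with the scaling identity $\f{2}{q}+\f{3}{p}=1-\alpha$, these force precisely the restriction $\alpha<\f14$ appearing in the statement.

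The remaining step is to interpolate $\|\Lambda^{1+\alpha}(|\Phi|\,|\Gamma|^{a-1})\|_{L^{p'}}$ by Gagliardo-Nirenberg between the dissipation norms $\|\nabla|\Gamma|^{a/2}\|_{L^2}$, $\|\nabla|\Phi|^{b/2}\|_{L^2}$ and lower-order Lebesgue norms of $(\Gamma,\Phi)$, distributing the powers to match $\f{2}{q}+\f{3}{p}=1-\alpha$, and then to apply Young's inequality to absorb the top-order contributions into the dissipation. This produces a differential inequality of the form
$$\f{d}{dt}\bigl(\|\Gamma\|_{L^a}^{a}+\|\Phi\|_{L^b}^{b}\bigr)+\text{(dissipation)}\le C\,\bigl\|\,|x_3|^\alpha u_\theta\bigr\|_{L^p}^{q}\bigl(\|\Gamma\|_{L^a}^{a}+\|\Phi\|_{L^b}^{b}\bigr)+\text{(lower order)},$$
which is closed by Gronwall's lemma on $(0,T)$ since $|x_3|^\alpha u_\theta\in L^q(0,T;L^p)$. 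In the endpoint case $q=\infty$, $p=3/(1-\alpha)$, the smallness hypothesis permits direct absorption of the coupling into the dissipation. Once $(\Gamma,\Phi)\in L^\infty_t L^{a}\cap L^{b}$ is secured, the axisymmetric Biot-Savart law upgrades this to a Serrin-admissible bound on $u$, and classical theory delivers smoothness on $(0,T]\times\R^3$.

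The main obstacle I foresee is the joint choice of $(a,b)$ and the Gagliardo-Nirenberg exponents so that the fractional-derivative norm $\|\Lambda^{1+\alpha}(|\Phi|\,|\Gamma|^{a-1})\|_{L^{p'}}$ is simultaneously dominated by the dissipation, respects the scaling identity, and satisfies the admissibility restrictions of Theorem \ref{the1.2}; this matching is exactly where the upper bound $\alpha<1/4$ is dictated. A secondary technical issue is the vortex-stretching term in the $\Phi$-equation, where $\nabla(u_r/r)$ must be re-expressed through $(\Gamma,\Phi)$ via the axisymmetric Biot-Savart representation in order to control the singularity at $r=0$, following the template of \cite{[CFZ],[CFZ1]}.
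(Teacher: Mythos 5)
Your overall architecture matches the paper's: $L^2$-type energy estimates on the pair $(\Gamma,\Phi)$ from \eqref{w}, insertion of the weight $|x_3|^{\alpha}$, H\"older against $\||x_3|^{\alpha}u_{\theta}\|_{L^p}$, the anisotropic Hardy--Sobolev inequality of Theorem \ref{the1.2}, Gagliardo--Nirenberg, Young, Gronwall, and the continuation criterion (the paper uses Lemma \ref{lem2.2}, which only needs $\Gamma\in L^{\infty}_tL^2$, so working with general exponents $a,b$ is unnecessary). The treatment of the vortex-stretching term is also essentially the paper's: integrate by parts to reach $\int u_{\theta}\bigl(\partial_r\frac{u_r}{r}\,\partial_{x_3}\Phi-\partial_{x_3}\frac{u_r}{r}\,\partial_r\Phi\bigr)$ and use the elliptic bounds \eqref{R-1}--\eqref{R-2} to trade two derivatives of $u_r/r$ for one derivative of $\Gamma$.

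However, your handling of the coupling term has a genuine gap. You place the entire singular weight $r^{-1}|x_3|^{-\alpha}$ on the product $|\Phi|\,|\Gamma|^{a-1}$ and reduce to $\|\Lambda^{1+\alpha}(|\Phi|\,|\Gamma|^{a-1})\|_{L^{p'}}$. Since $1+\alpha>1$, this quantity cannot be interpolated against the dissipation: Gagliardo--Nirenberg of the form $\|\Lambda^{s}g\|_{L^{q}}\lesssim\|g\|_{L^2}^{1-\theta}\|\nabla g\|_{L^2}^{\theta}$ requires $s\le 1$, and the energy inequality only controls one derivative of $\Gamma$ and $\Phi$ (a fractional Leibniz rule does not help, since it still puts the full $1+\alpha$ derivatives on one factor). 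There is no analogue of \eqref{R-1}--\eqref{R-2} for $\Gamma,\Phi$ themselves to absorb the excess derivative. The paper's resolution is to split the weight evenly before applying Hardy--Sobolev, writing $u_{\theta}\frac{\Gamma}{r}\Phi=u_{\theta}|x_3|^{\alpha}\cdot\frac{\Gamma}{|x_3|^{\alpha/2}r^{1/2}}\cdot\frac{\Phi}{|x_3|^{\alpha/2}r^{1/2}}$ and using H\"older with exponents $(p,\frac{2p}{p-1},\frac{2p}{p-1})$; each factor then requires only $\Lambda^{(1+\alpha)/2}$ with $(1+\alpha)/2<1$, which Gagliardo--Nirenberg does control by $\|\cdot\|_{L^2}$ and $\|\nabla\cdot\|_{L^2}$. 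Relatedly, your claim that the admissibility constraints of the coupling term ``force precisely $\alpha<\frac14$'' is not right: with $p>\frac{3}{1-\alpha}$ those constraints are satisfied for all $\alpha<1$; the restriction $\alpha<\frac14$ actually comes from the vortex-stretching term, where the Hardy inequality for $\bigl\|\,|x_3|^{-\alpha}\partial_r\frac{u_r}{r}\bigr\|_{L^{2p/(p-2)}}$ requires $\alpha<\frac{p-2}{2p}$, which near the endpoint $p=\frac{3}{1-\alpha}$ is exactly $\alpha<\frac{1+2\alpha}{6}$, i.e.\ $\alpha<\frac14$.
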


Besides, the Hardy-Sobolev inequality \eqref{anHS1}  allows us to slightly  improve the result by Chen,   Fang and   Zhang \cite{[CFZ1]}. The corresponding results can be stated as follows.
\begin{theorem}\label{the1.5}
	Let $u$ be an axisymmetric weak solution of the Navier-Stokes system \eqref{ANS}
	associated with the axisymmetric initial data $u_0 \in H^2(\mathbb{R}^3)$ satisfying div$u_0=0$. If \be\label{WWY2}
	r^{\alpha}u_{\theta}\in L^{q}(0,T; L_{x_{3}}^{p_{3},\infty}(\mathbb{R}  )L_{x_{2}}^{p_{2},\infty}(\mathbb{R}  )L_{x_{1}}^{p_{1},\infty}(\mathbb{R}  )), \f2q+\f1p_{3}+\f1p_{2}+\f1p_{1}=1-\alpha,\ee$$ 0\leq \alpha<\f12, \f{3}{1-\alpha}<p_{i}\leq\infty, \f{2}{1-\alpha}\leq q<\infty.
	$$
	or $r^{\alpha}u_{\theta}\in L^{\infty}(0,T; L_{x_{3}}^{p_{3},\infty}(\mathbb{R}  )L_{x_{2}}^{p_{2},\infty}(\mathbb{R}  )L_{x_{1}}^{p_{1},\infty}(\mathbb{R}  ))$ with $ \f1p_{3}+\f1p_{2}+\f1p_{1}=1-\alpha $  and the sufficiently small norm of
	$\|r^{\alpha}u_{\theta}\|_{L^{\infty}(0,T; L_{x_{3}}^{p_{3},\infty}(\mathbb{R}  )L_{x_{2}}^{p_{2},\infty}(\mathbb{R}  )L_{x_{1}}^{p_{1},\infty}(\mathbb{R}  ))},$
	then $u$ is smooth in $(0, T] \times\mathbb{R}^3$.
\end{theorem}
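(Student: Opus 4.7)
I would follow the Chen--Fang--Zhang strategy: perform an $L^2$ energy estimate on the coupled system \eqref{w} for $(\Gamma,\Phi)=(\omega_\theta/r,\omega_r/r)$, reduce matters to a single critical coupling integral, and close it by invoking the mixed-Lorentz Hardy--Sobolev inequality of Corollary \ref{coro}. First I would test the two equations of \eqref{w} against $\Gamma$ and $\Phi$ respectively, use $\operatorname{div} u=0$ to kill the transport terms, and exploit the good sign of $-(\Delta+\tfrac{2}{r}\partial_r)$ which produces the dissipation $\|\nabla\Gamma\|_2^2+\|\nabla\Phi\|_2^2$. The remaining right-hand side splits into the coupling integral $I=\int\frac{u_\theta}{r}\Phi\Gamma\,dx$ and a vortex-stretching integral involving $(\omega_r\partial_r+\omega_{x_3}\partial_{x_3})\frac{u_r}{r}$. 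The latter is handled by the pointwise bound $|\nabla(u_r/r)|\lesssim|\Phi|+|\omega_{x_3}|$ together with standard Sobolev embedding, so the heart of the matter is the estimate of $I$.

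Next, writing $I=\int r^{\alpha}u_\theta\cdot r^{-(1+\alpha)}\Phi\Gamma\,dx$ and pairing $r^{\alpha}u_\theta$ against its dual via H\"older in iterated mixed Lorentz spaces ($L^{p,\infty}\cdot L^{p',1}\hookrightarrow L^1$) yields
\[
|I|\leq \|r^{\alpha}u_\theta\|_{L^{p_3,\infty}_{x_3}L^{p_2,\infty}_{x_2}L^{p_1,\infty}_{x_1}}\,\bigl\|r^{-(1+\alpha)}\Phi\Gamma\bigr\|_{L^{p_3',1}_{x_3}L^{p_2',1}_{x_2}L^{p_1',1}_{x_1}}.
\]
Splitting the weight as $r^{-(1+\alpha)/2}\cdot r^{-(1+\alpha)/2}$ and applying a second mixed-Lorentz H\"older reduces the remaining factor to a product of two norms of the form $\|r^{-(1+\alpha)/2}g\|_{L^{2p_3',2}_{x_3}L^{2p_2',2}_{x_2}L^{2p_1',2}_{x_1}}$ with $g\in\{\Gamma,\Phi\}$. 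Now Corollary \ref{coro}, applied with $i=1$, $k_1=2$ and $\alpha_1=(1+\alpha)/2$ on the horizontal slab $\{x_1,x_2\}$, bounds each such norm by $\|\Lambda^{(1+\alpha)/2}_{x_1,x_2}g\|$ in the same mixed Lorentz space; the Mihlin--H\"ormander remark then upgrades the partial fractional derivative to the isotropic $\Lambda^{(1+\alpha)/2}g$. The index constraint $(1+\alpha)/2<2/(2p_\ell')$ reduces to $p_\ell>2/(1-\alpha)$, which is weaker than the hypothesis $p_\ell>3/(1-\alpha)$, so the application is admissible.

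Third, a Gagliardo--Nirenberg interpolation in Lorentz scales controls each $\|\Lambda^{(1+\alpha)/2}g\|$ by $\|g\|_{L^2}^{1-\theta}\|\nabla g\|_{L^2}^{\theta}$ with $\theta\in(0,1)$ fixed by dimensional scaling. The scaling identity $\tfrac{2}{q}+\tfrac{1}{p_1}+\tfrac{1}{p_2}+\tfrac{1}{p_3}=1-\alpha$ forces the total exponent of $\|\nabla g\|_{L^2}$ on the right to be strictly below $2$, so Young's inequality absorbs half of the dissipation on the left and produces
\[
\tfrac{d}{dt}\bigl(\|\Gamma\|_2^2+\|\Phi\|_2^2\bigr)+\tfrac{1}{2}\bigl(\|\nabla\Gamma\|_2^2+\|\nabla\Phi\|_2^2\bigr)\leq C\,\|r^{\alpha}u_\theta\|^{q}_{L^{p_3,\infty}_{x_3}L^{p_2,\infty}_{x_2}L^{p_1,\infty}_{x_1}}\bigl(\|\Gamma\|_2^2+\|\Phi\|_2^2\bigr).
\]
A Gr\"onwall argument on $(0,T)$, using the time-integrability $r^{\alpha}u_\theta\in L^q_t$, then yields $(\Gamma,\Phi)\in L^\infty_tL^2\cap L^2_tH^1$; the endpoint $q=\infty$ case is handled by the standard small-norm trick, absorbing the coupling directly into the dissipation without Gr\"onwall. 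The control on $(\Gamma,\Phi)$ propagates to full smoothness of $u$ via the usual axisymmetric Navier--Stokes bootstrap.

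\textbf{Main obstacle.} The delicate step is the exponent accounting: one must match the Lorentz indices in the two successive H\"older inequalities, respect the condition $\alpha_j<k_j/(\vec{p_j})_\ell$ from Corollary \ref{coro}, and simultaneously ensure that the Gagliardo--Nirenberg exponent of $\|\nabla g\|_{L^2}$ is strictly below $2$ so that Young's absorption succeeds. It is exactly this three-way compatibility that pins down the admissible ranges $0\leq\alpha<1/2$ and $p_i>3/(1-\alpha)$ stated in the theorem.
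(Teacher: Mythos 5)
Your treatment of the coupling integral $I=\int\frac{u_\theta}{r}\Phi\Gamma\,dx$ matches the paper's: the same splitting $r^{\alpha}u_\theta\cdot r^{-(1+\alpha)/2}\Gamma\cdot r^{-(1+\alpha)/2}\Phi$, the same iterated mixed-Lorentz H\"older, the same application of Corollary \ref{coro} with $k_1=2$, $\alpha_1=\frac{1+\alpha}{2}$ on the horizontal variables (the index check $p_\ell>\frac{2}{1-\alpha}$ is correct), followed by Sobolev embedding, Gagliardo--Nirenberg, Young, and Gr\"onwall, with the small-norm variant at $q=\infty$. However, there is a genuine gap in your dismissal of the vortex-stretching term $\int_{\mathbb{R}^3}\Phi\,(\omega_r\partial_r+\omega_{x_3}\partial_{x_3})\frac{u_r}{r}\,dx$. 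The claimed pointwise bound $|\nabla(u_r/r)|\lesssim|\Phi|+|\omega_{x_3}|$ is false: by the representation \eqref{equality}, $\nabla\frac{u_r}{r}$ is a nonlocal Calder\'on--Zygmund operator applied to $\Gamma$, controlled only in $L^p$ norm via \eqref{R-1}--\eqref{R-2}, not pointwise. Even granting some pointwise control, "standard Sobolev embedding" would leave you with a cubic expression in the vorticity that cannot be absorbed by the dissipation $\|\nabla\Gamma\|_2^2+\|\nabla\Phi\|_2^2$ without additional structure; this term is supercritical on its own and is in fact the reason the hypothesis on $r^{\alpha}u_\theta$ is needed twice.

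The paper's (and Chen--Fang--Zhang's) resolution is to use $\omega_r=-\partial_{x_3}u_\theta$ and $\omega_{x_3}=\frac{\partial_r(ru_\theta)}{r}$ and integrate by parts so that the vortex-stretching term becomes the Jacobian-type expression $\int_{\mathbb{R}^3}u_\theta\bigl(\partial_r\frac{u_r}{r}\,\partial_{x_3}\Phi-\partial_{x_3}\frac{u_r}{r}\,\partial_r\Phi\bigr)dx$, in which the factor $u_\theta$ reappears. One then runs exactly the same machinery as for $I$: H\"older against $\|r^{\alpha}u_\theta\|_{L^{p_3,\infty}_{x_3}L^{p_2,\infty}_{x_2}L^{p_1,\infty}_{x_1}}$, the Hardy--Sobolev inequality \eqref{anHS1} applied to $r^{-\alpha}\partial_r\frac{u_r}{r}$, Sobolev embedding and Gagliardo--Nirenberg, and finally Lemma \ref{magic} to convert $\|\partial_r\frac{u_r}{r}\|_{L^2}$ and $\|\nabla\partial_r\frac{u_r}{r}\|_{L^2}$ into $\|\Gamma\|_{L^2}$ and $\|\nabla\Gamma\|_{L^2}$. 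Without this integration by parts and the elliptic estimates of Lemma \ref{magic}, the energy inequality does not close, so your proposal as written omits roughly half of the actual proof, including its most structure-dependent step.
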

\begin{remark}
	In view of  inclusion relationship $L^{p_{3}}L^{p_{2}}L^{p_{1}}(\mathbb{R}^{3})\subset L^{p_{3},\infty}L^{p_{2},\infty}L^{p_{1},\infty}(\mathbb{R}^{3})$ and $L^{p_{1},\infty}(\mathbb{R}^{3})\subset L^{p_{3},\infty}L^{p_{2},\infty}L^{p_{1},\infty}(\mathbb{R}^{3})$ , we know that the condition
	$$
	r^{\alpha}u_{\theta}\in L^{q}(0,T;L^{\overrightarrow{q}}(\mathbb{R}^{3})),
	\f2q+\f1p_{3}+\f1p_{2}+\f1p_{1}=1-\alpha, 0\leq \alpha<\f12, \f{3}{1-\alpha}<p_{i}\leq\infty, \f{2}{1-\alpha}\leq q\leq\infty,
	$$
	or
	$$r^{d}u_{\theta}\in L^{q}(0,T; L^{p_{1},\infty}(\mathbb{R}^{3})), \f2q+\f3p_{1}=1-\alpha, 0\leq \alpha<\f12, \f{3}{1-d}<p_{1}\leq\infty, \f{2}{1-\alpha}\leq q\leq\infty,$$
	ensures  the   non-breakdown of solutions of the Navier-Stokes equations,   which is also a generalization of  \eqref{CFZ}.
\end{remark}\begin{remark}
	As Theorem \ref{the1.5}, one can  generalize \eqref{cfz2} to the  anisotropic  Lorentz spaces. We leave this to the interesting readers.
\end{remark}

In the spirit of Theorem \ref{the1.5}, we can show similar result involving radial component $r^{\alpha}u_{r}$.
\begin{theorem}\label{the1.6}
Let $u$ be an axisymmetric weak solution of the Navier-Stokes system \eqref{ANS}
associated with the axisymmetric initial data $u_0 \in H^2(\mathbb{R}^3)$ satisfying div$u_0=0$. If
\be\label{WWY3}\ba
&r^{\alpha}u_{r}\in L^{q}(0,T; L^{\overrightarrow{p},\infty}(\mathbb{R}^{3})),
\f2q+\f1p_{3}+\f1p_{2}+\f1p_{1}=1-\alpha, \\&0\leq \alpha<\f12, \f{3}{1-\alpha}<p_{i}\leq\infty, \f{2}{1-\alpha}\leq q\leq\infty, \ea\ee
 or  $r^{\alpha}u^{\theta}\in L^{\infty}(0,T; L_{x_{3}}^{p_{3},\infty}(\mathbb{R}  )L_{x_{2}}^{p_{2},\infty}(\mathbb{R}  )L_{x_{1}}^{p_{1},\infty}(\mathbb{R}  ))$ with $ \f1p_{3}+\f1p_{2}+\f1p_{1}=1-\alpha $  and its norm is sufficiently small,
then $u$ is smooth in $(0, T] \times\mathbb{R}^3$.
\end{theorem}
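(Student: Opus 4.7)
The strategy parallels that of Theorem \ref{the1.5}, interchanging the role of $u_\theta$ with that of $u_{r}$: I would work with the coupled vorticity system \eqref{w} for $(\Gamma,\Phi)=(\omega_\theta/r,\,\omega_r/r)$, perform a weighted $L^{p}$ energy estimate on the pair, and use the anisotropic Hardy-Sobolev inequality from Corollary \ref{coro} to absorb the resulting nonlinear coupling in terms of the scaling-critical quantity $r^{\alpha}u_{r}$.

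The first step is to test the $\Gamma$- and $\Phi$-equations against $\Gamma|\Gamma|^{p-2}$ and $\Phi|\Phi|^{p-2}$ respectively, integrating with the cylindrical volume element $r\,dr\,dx_{3}$. Because $\Div u=0$ and the flow is axisymmetric, the transport terms integrate away, while $-(\Delta+\tfrac{2}{r}\partial_{r})$ produces the good dissipation controlling $\nabla|\Gamma|^{p/2}$ and $\nabla|\Phi|^{p/2}$ in $L^{2}$. The surviving nonlinear contributions to bound are the coupling $\int(u_\theta/r)\,\Phi\,\Gamma|\Gamma|^{p-2}$ from the $\Gamma$-equation and, more crucially, the vortex-stretching term $\int(\omega_{r}\partial_{r}+\omega_{x_{3}}\partial_{x_{3}})(u_{r}/r)\,\Phi|\Phi|^{p-2}$ from the $\Phi$-equation.

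To exploit the hypothesis \eqref{WWY3}, I would integrate by parts in $(r,x_{3})$ so that the derivatives in the vortex-stretching term fall on $\omega$ and $\Phi|\Phi|^{p-2}$ rather than on $u_{r}/r$, and then split $u_{r} = r^{-\alpha}\cdot(r^{\alpha}u_{r})$. Applying H\"older's inequality in the anisotropic Lorentz space $L_{x_{3}}^{p_{3},\infty}L_{x_{2}}^{p_{2},\infty}L_{x_{1}}^{p_{1},\infty}$ then bounds this term by $\|r^{\alpha}u_{r}\|_{L^{\vec p,\infty}}$ times the dual norm of a weighted product involving $\omega$ and derivatives of $\Phi|\Phi|^{p-2}$. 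Distributing the weight $r^{-\alpha}$ (together with the extra $1/r$ produced by the vortex structure) appropriately across two or three anisotropic blocks $A_{1},\ldots,A_{i}$ and invoking Corollary \ref{coro} converts this weight into fractional derivatives whose total order is dictated by the scaling identity $\tfrac{2}{q}+\sum_{i=1}^{3}\tfrac{1}{p_{i}}=1-\alpha$. A Gagliardo-Nirenberg interpolation against the dissipation and a Young-type absorption reduce the estimate to a Gronwall-type inequality of the form
\[
\tfrac{d}{dt}\bigl(\|\Gamma\|_{L^{p}}^{p}+\|\Phi\|_{L^{p}}^{p}\bigr) + \text{dissipation} \le C\,\|r^{\alpha}u_{r}\|_{L^{\vec p,\infty}}^{q}\bigl(\|\Gamma\|_{L^{p}}^{p}+\|\Phi\|_{L^{p}}^{p}\bigr),
\]
which closes under \eqref{WWY3}. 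The coupling involving $u_\theta/r$ is estimated analogously and is in fact subcritical, since $u_\theta$ is recovered from $\omega_{x_{3}}$ via a Biot-Savart relation.

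The main technical obstacle is the careful allocation in the preceding step: the hypothesis $p_{i}>3/(1-\alpha)>3$ precludes a direct single-block application of Corollary \ref{coro}, so the weight $r^{-\alpha}$ must be spread across several anisotropic blocks, matching each block to a factor in the mixed Lorentz norm and to a fractional derivative of compatible order, while keeping the total Sobolev order strictly less than $1$ so that Gagliardo-Nirenberg interpolation against the $L^2$-dissipation is available. In the borderline case $q=\infty$, the smallness assumption on $\|r^{\alpha}u_{r}\|_{L^{\infty}L^{\vec p,\infty}}$ replaces the time-integrability and directly absorbs the critical term into the dissipation. Once a priori bounds on $\|\Gamma\|_{L^{p}}$ and $\|\Phi\|_{L^{p}}$ are secured, the standard axisymmetric regularity bootstrap (see \cite{[CFZ]}) upgrades the weak solution to a smooth one on $(0,T]\times\mathbb{R}^{3}$.
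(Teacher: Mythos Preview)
Your strategy does not match the paper's, and it contains a genuine gap.

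The paper does \emph{not} work with the $(\Gamma,\Phi)$ system for Theorem \ref{the1.6}. Instead it reduces (via \cite{[KP],[NP],[ZZ]}) to proving $u_\theta/r\in L^4(0,T;L^4)$ and then runs an $L^\ell$ energy estimate directly on the $u_\theta$ equation in \eqref{ANS}. The only bad term this produces is $\int \tfrac{1}{r}\,u_r\,|u_\theta|^{\ell}\,dx$, which is precisely where the hypothesis on $r^{\alpha}u_r$ enters: one writes $\tfrac{u_r}{r}|u_\theta|^{\ell}=\bigl(r^{\alpha}u_r\bigr)|u_\theta|^{\ell(1-\tfrac{1+\alpha}{2})}\cdot\tfrac{|u_\theta|^{\ell(1+\alpha)/2}}{r^{1+\alpha}}$, applies Young, then H\"older in mixed Lorentz spaces together with the Sobolev embedding \eqref{sl} (no Hardy-Sobolev inequality is actually needed at this step). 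This closes a Gronwall estimate on $\|u_\theta\|_{L^\ell}$.

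Your route through the $(\Gamma,\Phi)$ system breaks down at the $\Gamma$-equation. Testing with $\Gamma|\Gamma|^{p-2}$ leaves you with the coupling
\[
\int_{\mathbb{R}^3}\frac{u_\theta}{r}\,\Phi\,\Gamma|\Gamma|^{p-2}\,dx,
\]
and this term contains $u_\theta$, not $u_r$. Your claim that it is ``subcritical, since $u_\theta$ is recovered from $\omega_{x_3}$ via a Biot-Savart relation'' is not justified: $\omega_{x_3}$ is \emph{not} part of the closed $(\Gamma,\Phi)$ system, and there is no a priori bound on $\omega_{x_3}$ (or on $u_\theta$) available from your hypothesis \eqref{WWY3} alone. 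In Theorems \ref{the1.4} and \ref{the1.5} the $(\Gamma,\Phi)$ approach succeeds precisely because the hypothesis there is on $u_\theta$; when the hypothesis is instead on $u_r$, the natural object to estimate is $u_\theta$ via its own evolution equation, which is what the paper does.
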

\begin{remark}
This theorem is an improvement of corresponding results in  \cite{[KPZ]}.
\end{remark}
This  paper is organized as follows. In   section 2,
  we list some basic  fact of  the  various  functions spaces used in this paper and the auxiliary lemma for the discussion of the axisymmetric  Navier-Stokes equations.
  The section 3  is devoted to the proof of     Hardy-Sobolev type inequalities.
 In  Section 4, we are concerned with the sufficient regularity  conditions for  weak solutions
 of the 3D axisymmetric Navier-Stokes equations by  applications of the Hardy-Sobolev type inequalities established in last section.

\section{ Function spaces and key auxiliary lemmas} \label{section2}
For $p\in [1,\,\infty]$, the notation $L^{p}(0,\,T;X)$ stands for the set of measurable function $f$ on the interval $(0,\,T)$ with values in $X$ and $\|f\|_{X}$ belonging to $L^{p}(0,\,T)$.  The Fourier transform $\hat{f}$ of a Schwartz function $f$ on $\mathbb{R}^{n}$ is defined as $\hat{f}(\xi)=\frac{1}{(2 \pi)^{\frac{n}{2}}}\int_{\mathbb{R}^{n}}f (x)e^{- i\xi\cdot x}\,dx, $
and the inverse Fourier transform $f^{\vee}$ is given by
$
f^{\vee}(\xi)=\widehat{f}(-\xi)
$
for all $\xi \in \mathbb{R}^{n}.$ Furthermore, for $s\geq 0,$ we define $\Lambda^{s}f $ by
$\widehat{\Lambda^{s} f}(\xi)=(\sum^{n}_{i=1}|\xi_{i}|^{2})^{s/2}\hat{f}(\xi)$, where the notation $\Lambda$ stands for the square root of negative Laplacian $(-\Delta)^{1/2}$.  Similarly,  we denote  $\widehat{\Lambda_{x_{i}}^{ s} f}(\xi)=|\xi_{i}|^{ s}\hat{f}(\xi)$ and
$\widehat{\Lambda_{x_{1}, \cdots,x_{k}}^{ s} f}(\xi)=(\sum^{k}_{i=1}|\xi_{i}|^{2})^{ s/2}\hat{f}(\xi)$.
We   will use $C$ to denote an absolute
   constant which may be different from line to line unless otherwise stated.

 Next, we recall the definition of   Lorentz spaces.
  Denote
  the distribution function of a   measurable function  $f$ on $\mathbb{R}^{n}$  by
  $f_{\ast}$ defined on $[0,\infty)$ by
$$
f_{\ast}(\alpha)=|\{x\in \mathbb{R}^{n}:|f(x)|>\alpha\}|.
$$
The decreasing rearrangement of $f$ is the function $f^{\ast}$ defined on $[0,\infty)$ by
$$
f^{\ast}(t)=\inf\{\alpha>0: f_{\ast}(\alpha)\leq t\}.
$$
For $p,q\in (0,\infty]$, we write
$$
\|f\|_{L^{p,q}(\mathbb{R}^{n})}=\left\| t^{\f{1}{p}}f^{\ast}(t) \right\|_{L^{q}\left(\mathbb{R}^{+}, \frac{d t}{t}\right)} =\left\{\ba
&\B(\int_{0}^{\infty}\left(t^{\f{1}{p}}f^{\ast}(t)\right)^{q}\f{dt}{t}\B)^{\f1q}, ~~\text { if } q<\infty, \\
 &\sup_{t>0}t^{\f{1}{p}}f^{\ast}(t), ~~\text { if } q=\infty.
\ea\right.
$$
Furthermore,
$$
L^{p,q}(\mathbb{R}^{n})=\big\{f: f~ \text{is a measurable function on}~ \mathbb{R}^{n} ~\text{and} ~\|f\|_{L^{p,q}(\mathbb{R}^{n})}<\infty\big\}.
$$
It is well-known  that $L^{\infty,\infty}=L^{\infty}$, $L^{q,q}=L^{q}$ and $L^{\infty,q}=\{0\}$	for $0<q<\infty$.

The study    of      anisotropic Lebesgue spaces $L^{\overrightarrow{q}} (\mathbb{R}^{n})$  originated from
  Benedek-Panzone's work \cite{[BP]}.
A function $f$ belongs to  the anisotropic Lebesgue space $L^{\overrightarrow{q}} (\mathbb{R}^{n})$
if
  $$\ba\|f\|_{L^{\overrightarrow{q}}_{x}(\mathbb{R}^{n})}=&\|f\|_{L_{1}^{q_{1}}L_{2}^{q_{2}}\cdots L_{n}^{q_{n}}(\mathbb{R}^{n})}\\=&
  \B\|\cdots\big\|\|f\|_{L_{x_1}^{q_{1}}(\mathbb{R})}\big\|_{L_{x_2}^{q_{2}}(\mathbb{R})}\cdots\B\|_{L_{x_n}^{q_{n}} (\mathbb{R})}<\infty.  \ea$$
Mixed Lorentz space $L^{\overrightarrow{p},\overrightarrow{q}}(\mathbb{R}^{n})$ was introduced in  \cite{[Blozinski],[KT],[Fernandez]} and its norm is
determined by
  $$\ba
  \|f\|_{L^{\overrightarrow{p},\overrightarrow{q}}_{x}(\mathbb{R}^{n})}=&\|f\|_{L_{1}^{p_{1},q_{1}}L_{2}^{p_{2},q_{2}}\cdots L_{n}^{p_{n},q_{n}}(\mathbb{R}^{n})}\\=&
  \B\|\cdots\big\|\|f\|_{L_{x_1}^{p_{1},q_{1}}(\mathbb{R})}\big\|_{L_{x_2}^{p_{2},q_{2}}(\mathbb{R})}\cdots\B\|_{L_{x_n}^{p_{n},q_{n}} (\mathbb{R})}<\infty. \ea $$

For the convenience of readers,
we present some properties of mixed Lorentz spaces which will be frequently used in this paper as follows.
\begin{itemize}
\item
H\"older's inequality in mixed Lorentz spaces  \cite{[Blozinski],[KT]}
\be\label{HolderIQ}
\ba &\|fg\|_{L^{\overrightarrow{r},\overrightarrow{s}}(\mathbb{R}^{n})}\leq C \,\|f\|_{L^{\overrightarrow{r_{1}},\overrightarrow{s_{1}}}(\mathbb{R}^{n})}
\|g\|_{L^{\overrightarrow{r_{2}},\overrightarrow{s_{2}}}(\mathbb{R}^{n})},
\\
&\text{with}~~\f{1}{\overrightarrow{r}}=\f{1}{\overrightarrow{r_{1}}}+\f{1}{\overrightarrow{r_{2}}},~~\f{1}{\overrightarrow{s}}=\f{1}{\overrightarrow{s_{1}}}+\f{1}{\overrightarrow{s_{2}}},
~~0<\overrightarrow{r_{1}},\overrightarrow{r_{2}},
\overrightarrow{s_{1}},\overrightarrow{s_{2}}\leq \infty.\ea\ee

\item

The mixed Lorentz spaces increase as the exponent $\overrightarrow{q}$ increases \cite{[Blozinski],[KT]}

For $0< \overrightarrow{p}\leq\infty$ and $0< \overrightarrow{q_{1}}<\overrightarrow{q_{2}}\leq\infty,$
\be\label{lincreases}
\|f\|_{L^{\overrightarrow{p},\overrightarrow{q_{2}}}(\mathbb{R}^{n})}\leq C\|f\|_{L^{\overrightarrow{p},\overrightarrow{q_{1}}}(\mathbb{R}^{n})}.
\ee

\item Sobolev inequality in mixed Lorentz spaces \cite{[Blozinski],[KT]}, for $1\leq\overrightarrow{\ell}\leq\infty$
\be\label{sl}
\|f\|_{L^{\overrightarrow{p}, \overrightarrow{\ell}}(\mathbb{R}^{n})}\leq C \|(-\Delta)^{\f{s}{2}} f\|_{L^{\overrightarrow{r},\overrightarrow{\ell}}(\mathbb{R}^{n})},~~ \text{with}~~ \sum_{i=1}^{n}\left(\frac{1}{r_{i}}-\frac{1}{p_{i}}\right)=s,~1<r_{i}<p_{i}<\infty.
\ee
\item Young inequality in mixed Lorentz spaces \cite{[Blozinski],[KT]}

Let $1<\overrightarrow{p},\overrightarrow{q},\overrightarrow{r}<\infty$, $0<\overrightarrow{s_{1}},\overrightarrow{s_{2}}\leq\infty$
,$\f{1}{\overrightarrow{p}}+\f{1}{\overrightarrow{q}}=\f{1}{\overrightarrow{r}}+1$, and $ \f{1}{\overrightarrow{s}}=\f{1}{\overrightarrow{s_{1}}}+\f{1}{\overrightarrow{s_{2}}}$. Then there holds
\be\label{young}
\|f\ast g\|_{L^{\overrightarrow{r},\overrightarrow{s}}(\mathbb{R}^{n})}\leq C \,\|f \|_{L^{\overrightarrow{p},\overrightarrow{s_{1}}}(\mathbb{R}^{n})}\|g \|_{L^{\overrightarrow{q},\overrightarrow{s_{2}}}(\mathbb{R}^{n})}.
\ee
\end{itemize}
Finally,  we recall two known results involving the regularity of the 3D axisymmetric  Navier-Stokes equations.
\begin{lemma}(\cite{[MZ],[CFZ]})\label{magic} Assume $u$ is the smooth axisymmetric solution of equations \eqref{ANS}, $\omega=curl\,u,$ then there holds
	\begin{equation}\label{equality}
	\frac{u_r}{r}=\partial_{x_{3}} \Delta^{-1}\Gamma-
2\frac{\partial_r}{r}\Delta^{-1}\partial_{x_{3}}\Delta^{-1}
\Gamma.
	\end{equation}
	In addition, for  $1< p<\infty, $ it is valid that
	\begin{equation}\label{R-1}
	\norm{\nabla \frac{u_r}{r}}_{L^p}\leq C\norm{\Gamma}_{L^p},
	\end{equation}
	and
	\begin{equation}\label{R-2}
	\norm{\nabla ^2\frac{u_r}{r}}_{L^p}\leq C\norm{\partial_{x_3}\Gamma}_{L^p}.
\end{equation}\label{lem2.1}
\end{lemma}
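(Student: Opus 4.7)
The plan is to establish the three assertions in order, organised around the observation that, on axisymmetric functions, the operator $\widetilde{\Delta}_5 := \Delta + \tfrac{2}{r}\partial_r$ coincides with the Euclidean Laplacian on $\mathbb{R}^5$ acting on the axisymmetric extension having four radial variables. First I would introduce the axisymmetric stream function $\psi$ via $u_r = -\partial_{x_3}\psi$ and $u_{x_3} = \tfrac{1}{r}\partial_r(r\psi)$; incompressibility is automatic and a direct calculation of $\omega_\theta = \partial_{x_3}u_r - \partial_r u_{x_3}$ produces $-(\Delta - r^{-2})\psi = \omega_\theta$. Setting $\tilde{\psi} = \psi/r$ and invoking $\Delta r = 1/r$ and $\nabla r = e_r$ in $\mathbb{R}^3$, the product rule yields $(\Delta - r^{-2})(r\tilde{\psi}) = r\widetilde{\Delta}_5\tilde{\psi}$, hence $\widetilde{\Delta}_5\tilde{\psi} = -\Gamma$; since $u_r/r = -\partial_{x_3}\tilde{\psi}$, one obtains the master equation
\[\widetilde{\Delta}_5\Bigl(\tfrac{u_r}{r}\Bigr) = \partial_{x_3}\Gamma.\]

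To verify the representation \eqref{equality} I would apply $\widetilde{\Delta}_5$ to its right-hand side. The key algebraic identity, proved by a short calculation in which the $1/r^2$ and $1/r^3$ contributions in $\Delta[\tfrac{1}{r}\partial_r F]$ and $\tfrac{2}{r}\partial_r[\tfrac{1}{r}\partial_r F]$ cancel exactly, is
\[\widetilde{\Delta}_5\Bigl[\tfrac{1}{r}\partial_r F\Bigr] = \tfrac{1}{r}\partial_r \Delta F\]
for every smooth axisymmetric $F$. Applying this with $F = \Delta^{-1}\partial_{x_3}\Delta^{-1}\Gamma$, together with $\widetilde{\Delta}_5(\partial_{x_3}\Delta^{-1}\Gamma) = \partial_{x_3}\Gamma + \tfrac{2}{r}\partial_r\partial_{x_3}\Delta^{-1}\Gamma$, shows that the right-hand side of \eqref{equality} also satisfies $\widetilde{\Delta}_5 v = \partial_{x_3}\Gamma$; uniqueness for this elliptic equation among decaying solutions then forces the identity.

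For the estimates \eqref{R-1} and \eqref{R-2} I would differentiate \eqref{equality} term by term. The contribution from $\partial_{x_3}\Delta^{-1}\Gamma$ is a composition of Riesz transforms applied to $\Gamma$, bounded on $L^p$ by Calder\'on--Zygmund theory. For the more delicate term $\tfrac{1}{r}\partial_r F$ with $F = \Delta^{-1}\partial_{x_3}\Delta^{-1}\Gamma$, I would exploit axial symmetry to write $F = H(r^2/2, x_3)$, so that $\tfrac{1}{r}\partial_r F = \partial_s H$ is smooth up to the axis; its derivatives $\partial_r\partial_s H = r\partial_s^2 H$ and $\partial_{x_3}\partial_s H$ rewrite in Cartesian coordinates as bounded Fourier multipliers of $\Delta^2 F = \partial_{x_3}\Gamma$, with the residual $1/r$-factors absorbed by the axisymmetric Hardy inequality $\|g/r\|_{L^p(\mathbb{R}^3)} \leq C\|\nabla_\perp g\|_{L^p(\mathbb{R}^3)}$ valid for axisymmetric $g$ vanishing on the $x_3$-axis. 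This yields \eqref{R-1} with source $\Gamma$ and, by absorbing one extra $\partial_{x_3}$ into the source before repeating the same scheme, \eqref{R-2} with source $\partial_{x_3}\Gamma$.

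The main obstacle I anticipate is handling the $\tfrac{1}{r}\partial_r$ operator within the three-dimensional $L^p$ framework: lifting to $\mathbb{R}^5$ gives clean Calder\'on--Zygmund bounds for the master equation $\widetilde{\Delta}_5(u_r/r) = \partial_{x_3}\Gamma$, but the axisymmetric $L^p(\mathbb{R}^5)$ and $L^p(\mathbb{R}^3)$ norms differ by the $r^2$-weight and cannot be transferred directly. The identity \eqref{equality} is designed precisely so that every apparent $1/r^k$ singularity multiplies a derivative of $F$ that vanishes on the axis to matching order, converting the singular-looking estimates into standard three-dimensional Calder\'on--Zygmund plus Hardy bounds.
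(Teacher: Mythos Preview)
The paper does not prove this lemma; it simply quotes it from \cite{[MZ],[CFZ]}, so there is no in-paper argument to compare against. Your derivation of the representation \eqref{equality} is correct and is essentially the one found in those references: the stream-function reduction to $\widetilde{\Delta}_5(u_r/r)=\partial_{x_3}\Gamma$ is standard, and your commutator identity $\widetilde{\Delta}_5\bigl[\tfrac{1}{r}\partial_r F\bigr]=\tfrac{1}{r}\partial_r\Delta F$ gives a clean verification of the formula.

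The argument you sketch for the $L^p$ bounds, however, has a real gap. The Hardy inequality you invoke, $\|g/r\|_{L^p(\mathbb{R}^3)}\leq C\|\nabla_\perp g\|_{L^p(\mathbb{R}^3)}$ with $r=(x_1^2+x_2^2)^{1/2}$, is a fibrewise \emph{two}-dimensional Hardy inequality and therefore holds only for $1<p<2$; for $p\geq 2$ it is false, so your scheme cannot deliver \eqref{R-1}--\eqref{R-2} on the full range $1<p<\infty$. In addition, the claim that the derivatives of $\tfrac{1}{r}\partial_r F$ ``rewrite as bounded Fourier multipliers'' is never made precise: $\tfrac{1}{r}\partial_r$ is not translation-invariant, so it is not a multiplier, and you do not exhibit any concrete symbol to which Mihlin--H\"ormander would apply. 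The cited references bypass Hardy altogether. One route is to write out the integral kernel of $\nabla\,\tfrac{\partial_r}{r}\Delta^{-1}\partial_{x_3}\Delta^{-1}$ explicitly and check the Calder\'on--Zygmund conditions in $\mathbb{R}^3$. An equivalent and perhaps cleaner route, closer in spirit to your five-dimensional observation, is to keep the lift $\widetilde{\Delta}_5(u_r/r)=\partial_{x_3}\Gamma$ and note that the weight $|y|^{-2}$ on $\mathbb{R}^5=\mathbb{R}^4_y\times\mathbb{R}_{x_3}$ lies in the Muckenhoupt class $A_p(\mathbb{R}^5)$ for \emph{every} $p>1$ (the cylindrical power-weight criterion $-k<\alpha<k(p-1)$ with $k=4$, $\alpha=-2$), so weighted Calder\'on--Zygmund theory transfers the $L^p(\mathbb{R}^5)$ estimate back to $L^p(\mathbb{R}^3)$ without any restriction on $p$. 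Either of these closes the gap; your Hardy-based plan does not.
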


\begin{lemma} (\cite{[CFZ]})\label{lem2.2}
Let $u\in C([0,T);H^{2}(\mathbb{R}^{3})) \cap L^{2}([0,T);H^{2}(\mathbb{R}^{3})) $ be the axisymmetric solution of the Navier-Stokes equations with the axisymmetric   divergence-free  initial data
$u_{0}$. If $T<\infty$ and $\Gamma \in  L^{\infty}([0,T);L^{2}(\mathbb{R}^{3}))$, then $u$ can be continued beyond $T$.
\end{lemma}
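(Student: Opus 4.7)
The plan is to show that the assumption $\Gamma\in L^{\infty}([0,T);L^{2}(\mathbb{R}^{3}))$ forces $\Phi=\omega_{r}/r$ into the parabolic energy class $L^{\infty}([0,T);L^{2})\cap L^{2}([0,T);H^{1})$, and then to propagate this to a uniform $H^{2}$ bound on $u$ up to $T$, from which continuation past $T$ follows by the standard local well-posedness theory for axisymmetric strong solutions.

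First I would perform an $L^{2}$ energy estimate on the $\Phi$-equation in the coupled system \eqref{w}. Multiplying by $\Phi$ and integrating over $\mathbb{R}^{3}$, the convective term $u\cdot\nabla\Phi$ vanishes since $\operatorname{div}u=0$, while the axisymmetric viscous operator $-(\Delta+\tfrac{2}{r}\partial_{r})$ is coercive in the natural weighted $L^{2}$ inner product (this operator is, after lifting axisymmetric functions to $\mathbb{R}^{5}$, the five-dimensional Laplacian). What remains is to control the coupling
\[
I(t)=\int_{\mathbb{R}^{3}}(\omega_{r}\partial_{r}+\omega_{x_{3}}\partial_{x_{3}})(u_{r}/r)\,\Phi\,dx.
\]
By Lemma \ref{lem2.1}, the hypothesis $\Gamma\in L^{\infty}_{t}L^{2}_{x}$ together with \eqref{R-1} yields $\|\nabla(u_{r}/r)\|_{L^{2}}\le C\|\Gamma\|_{L^{2}}$ uniformly on $[0,T)$, hence by Sobolev embedding $u_{r}/r\in L^{\infty}_{t}L^{6}_{x}$. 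Using that $(\omega_{r},\omega_{x_{3}})$ is divergence-free in the $(r,x_{3})$ half-plane with respect to $r\,dr\,dx_{3}$, I would integrate by parts to move the derivative onto $\Phi$:
\[
I(t)=-\int_{\mathbb{R}^{3}}(u_{r}/r)\bigl(\omega_{r}\partial_{r}+\omega_{x_{3}}\partial_{x_{3}}\bigr)\Phi\,dx,
\]
and then apply Hölder with exponents $(L^{6},L^{3},L^{2})$. The factor $\omega_{r}=r\Phi$ is controlled by Gagliardo-Nirenberg interpolation between the energy bound and $\nabla\Phi\in L^{2}$, while for $\omega_{x_{3}}=\partial_{r}(ru_{\theta})/r$ I would exploit the Ladyzhenskaya-type maximum-principle bound $\|ru_{\theta}\|_{L^{\infty}_{t,x}}\le\|ru_{\theta}^{0}\|_{L^{\infty}_{x}}$ combined with Calderón-Zygmund estimates. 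Young's inequality then lets the top-order part of $I(t)$ be absorbed into $\|\nabla\Phi\|_{L^{2}}^{2}$, leaving an integrable-in-time control on $\|\Phi\|_{L^{2}}^{2}$.

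A Grönwall argument now yields $\Phi\in L^{\infty}([0,T);L^{2})\cap L^{2}([0,T);H^{1})$. Combined with $\omega_{\theta}=r\Gamma$ and the representation of $\omega_{x_{3}}$ in terms of $u_{\theta}$, this gives a uniform $L^{2}$ bound on the full vorticity $\omega$ on $[0,T)$. A standard $H^{2}$ vorticity energy estimate, handling the vortex-stretching term $(\omega\cdot\nabla)u$ through the just-obtained vorticity control together with Gagliardo-Nirenberg interpolation, then propagates the $H^{2}$ regularity of $u_{0}$ uniformly up to $T$. This provides a finite limit $\lim_{t\uparrow T}\|u(t)\|_{H^{2}}<\infty$, and by local well-posedness the solution extends beyond $T$.

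The main obstacle is the estimate of the coupling $I(t)$. Unlike the swirl-free case where this term is absent, here one must simultaneously leverage the Biot-Savart-type identity \eqref{equality} that ties $\nabla(u_{r}/r)$ to $\Gamma$, the two-dimensional divergence-free structure of $(\omega_{r},\omega_{x_{3}})$, and the maximum principle for $ru_{\theta}$. Finding a splitting that absorbs the top-order vorticity derivative into the viscous dissipation without paying an unacceptable weight at the symmetry axis $\{r=0\}$ is the delicate point on which the whole continuation argument rests.
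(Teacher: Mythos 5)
First, a point of comparison: the paper offers no proof of this lemma at all --- it is imported verbatim from \cite{[CFZ]} --- so the only meaningful benchmark is the argument there, which is far shorter than what you propose. From $\Gamma\in L^{\infty}(0,T;L^{2})$ and \eqref{R-1} one gets $\nabla\frac{u_r}{r}\in L^{\infty}(0,T;L^{2})$, hence $\frac{u_r}{r}\in L^{\infty}(0,T;L^{6}(\mathbb{R}^{3}))$ by Sobolev embedding; since $\frac{2}{\infty}+\frac{3}{6}=\frac12<1$ this is a subcritical Serrin-type class for $\frac{u_r}{r}$, and one concludes by the known regularity criterion for axisymmetric solutions in terms of $\frac{u_r}{r}$. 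You rediscover the first half of this (the passage to $\frac{u_r}{r}\in L^{\infty}_{t}L^{6}_{x}$), but then replace the citation of that criterion by a self-contained energy argument, and that argument does not close.

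The gap is a circularity in your estimate of the coupling term, compounded by repeated misuse of the weighted quantities. After the (legitimate) integration by parts you must bound $\|(\omega_r,\omega_{x_3})\|_{L^{3}}$. Your claim that $\omega_r=r\Phi$ is controlled by Gagliardo--Nirenberg interpolation between the energy bound and $\nabla\Phi\in L^{2}$ is false: the weight $r$ is unbounded, so no Lebesgue norm of $r\Phi$ is controlled by norms of $\Phi$; the honest interpolation $\|\omega_r\|_{L^{3}}\leq\|\omega_r\|_{L^{2}}^{1/2}\|\nabla\omega_r\|_{L^{2}}^{1/2}$ requires exactly the uniform vorticity control that the continuation argument is supposed to produce. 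Likewise, $\|ru_\theta\|_{L^{\infty}}\leq C$ gives no $L^{3}$ bound on $\omega_{x_3}=\partial_r(ru_\theta)/r$ via Calder\'on--Zygmund theory. The same error recurs downstream: $\omega_\theta=r\Gamma$ together with $\Gamma\in L^{\infty}_{t}L^{2}_{x}$ does \emph{not} yield $\omega_\theta\in L^{\infty}_{t}L^{2}_{x}$, so your ``uniform $L^{2}$ bound on the full vorticity'' is unsupported. If you insist on a self-contained proof, the correct first move is the one the paper itself uses in \eqref{4.2}: rewrite $\int_{\mathbb{R}^3}\Phi(\omega_r\partial_r+\omega_{x_3}\partial_{x_3})\frac{u_r}{r}\,dx=\int_{\mathbb{R}^3}u_\theta\bigl(\partial_r\frac{u_r}{r}\,\partial_{x_3}\Phi-\partial_{x_3}\frac{u_r}{r}\,\partial_r\Phi\bigr)dx$ so that no bare vorticity appears, and then run the continuation through estimates on $\omega$ itself using $\frac{u_r}{r}\in L^{\infty}_{t}L^{6}_{x}$ and the maximum principle for $ru_\theta$. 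As written, your step producing $\Phi\in L^{\infty}_{t}L^{2}\cap L^{2}_{t}H^{1}$ leaves a factor $\|\nabla\omega\|_{L^{2}}^{1/2}$ that cannot be absorbed into the dissipation of the $\Phi$-equation alone.
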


 \section{Proof of anisotropic Hardy-Sobolev inequality}
This section is devoted to the proof of various Hardy-Sobolev type inequalities, in which the main tool is the mixed Lorentz spaces. Our critical observation is that  the function  $[\prod\limits^{i}_{j=1}(\prod\limits_{\ell=1}^{k_{j}} |x_{\sum^{j-1}_{m=1}k_{m}+\ell}|^{\f{\alpha_{j}}{k_{j}}})]^{-1}$
 belongs to  anisotropic   Lorentz  space $L^{\f{k_{1}}{\alpha_{1}},\infty}(\mathbb{R}^{k_{1}})\cdots
L^{\f{k_{i}}{\alpha_{i}},\infty}(\mathbb{R}^{k_{i}})$. To  illustrate the above argument, we begin with the proof of Theorem \ref{the1.1}.
 \begin{proof} [Proof of Theorem \ref{the1.1}]
Notice that $|x|^{-s}\in L^{\f{n}{ s},\infty}(\mathbb{R}^{n})$.
Combing this with
  the H\"older inequality  \eqref{HolderIQ} in  anisotropic  Lorentz spaces, we infer that
$$\ba
\B\|\f{f(x)}{|x|^{s}}\B\|_{L^{\overrightarrow{p} ,\overrightarrow{q}}(\mathbb{R}^{n})}\leq
C\||x|^{-s}\|_{L^{\f{n}{ s},\infty} (\mathbb{R}^{n})}\|f\|_{L^{\overrightarrow{p^{\ast}} ,\overrightarrow{q} }(\mathbb{R}^{n})}
\leq  C\|f\|_{L^{\overrightarrow{p^{\ast}} ,\overrightarrow{q} }(\mathbb{R}^{n})},
\ea$$
where  $ p_{i}^{\ast}=\f{np_{i}}{n-sp_{i}}$ for $1\leq i\leq n$.

The Sobolev inequality \eqref{sl} in mixed Lorentz spaces  further guarantees that
$$\ba
\|f\|_{L^{\overrightarrow{p^{\ast}} ,\overrightarrow{q} }(\mathbb{R}^{n})}
\leq  C
 \|\Lambda^{s}f\|_{L^{\overrightarrow{p},\overrightarrow{q}} (\mathbb{R}^{n})}.
\ea$$
The proof of this theorem is finished.
\end{proof}
Now  we turn our attention to the proof of Theorem \ref{the1.2}.
\begin{proof} [Proof of Theorem \ref{the1.2}]
Thanks to the celebrated Fubini's theorem for the $L^{p}(\mathbb{R}^{n})$ norm, it suffices to prove \eqref{anHS} for the case that $i_{m}=m$ with $1\leq m\leq \Sigma_{j=1}^{i}k_{j}$.
According to the definition of $r_{j}$, we see that, for $1\leq j\leq i$ and $1\leq\ell\leq k_{j}$,
$$r_{j}\geq |x_{\sum^{j-1}_{m=1}k_{m}+\ell}|.$$
It turns out that
$$r_{j}^{\alpha_{j}}=(r_{j}^{\f{\alpha_{j}}{k_{j}}})^{k_{j}}
\geq \prod_{\ell=1}^{k_{j}} |x_{\sum^{j-1}_{m=1}k_{m}+\ell}|^{\f{\alpha_{j}}{k_{j}}}.$$
Hence, we see that
$$
\B\|\f{f(x_{1},x_{2},\cdots,x_{n})}{\prod^{i}_{j=1}|r_{j}|^{\alpha_{j}}}
\B\|_{L^{p}(\prod^{i}_{j=1}\mathbb{R}^{k_{j}})}
\leq \B\|\f{f(x_{1},x_{2},\cdots,x_{n})}{\prod^{i}_{j=1}\B(\prod_{\ell=1}^{k_{j}} |x_{\sum^{j-1}_{m=1}k_{m}+\ell}|^{\f{\alpha_{j}}{k_{j}}}\B)}
\B\|_{L^{p}(\prod^{i}_{j=1}\mathbb{R}^{k_{j}})}.$$
It is clear that
$$
\B\|\f{1}{\prod^{i}_{j=1}\B(\prod_{\ell=1}^{k_{j}} |x_{\sum^{j-1}_{m=1}k_{m}+\ell}|^{\f{\alpha_{j}}{k_{j}}}\B)}
\B\|_{L^{\f{k_{1}}{\alpha_{1}},\infty}(\mathbb{R}^{k_{1}})\cdots
L^{\f{k_{i}}{\alpha_{i}},\infty}(\mathbb{R}^{k_{i}})}<\infty.
$$
By the H\"older inequality \eqref{HolderIQ} and Sobolev inequality  \eqref{sl} in mixed Lorentz spaces, we observe  that
$$\ba
&\B\|\f{f(x_{1},x_{2},\cdots,x_{n})}{\prod^{i}_{j=1}\B(\prod_{\ell=1}^{k_{j}} |x_{\sum^{j-1}_{m=1}k_{m}+\ell}|^{\f{\alpha_{j}}{k_{j}}}\B)}
\B\|_{L^{p}(\prod^{i}_{j=1}\mathbb{R}^{k_{j}})}\\\leq&
C\B\|\f{1}{\prod^{i}_{j=1}\B(\prod_{\ell=1}^{k_{j}} |x_{\sum^{j-1}_{m=1}k_{m}+\ell}|^{\f{\alpha_{j}}{k_{j}}}\B)}
\B\|_{L^{\f{k_{1}}{\alpha_{1}},\infty}(\mathbb{R}^{k_{1}})\cdots
L^{\f{k_{i}}{\alpha_{i}},\infty}(\mathbb{R}^{k_{i}})}
\|f\|_{L^{\f{pk_{1}}{k_{1}-p\alpha_{1}},p}(\mathbb{R}^{k_{1}})\cdots
L^{\f{p k_{i}}{k_{i}-p\alpha_{i}},p}(\mathbb{R}^{k_{i}})}
\\\leq&C \|\Lambda_{x_{1},\cdots,x_{\Sigma_{j=1}^{i}k_{j}}}^{\alpha_{1}
+\cdots+\alpha_{i}}f\|_{L^{p}(\prod^{i}_{j=1}\mathbb{R}^{k_{j}})}. \ea$$
It follows that
$$\ba
&\B\|\f{f(x_{1},x_{2},\cdots,x_{n})}{\prod^{i}_{j=1}\B(\prod_{\ell=1}^{k_{j}} |x_{\sum^{j-1}_{m=1}k_{m}+\ell}|^{\f{\alpha_{j}}{k_{j}}}\B)}
\B\|_{L^{p}(\mathbb{R}^{n})} 
\leq  C\|\Lambda_{x_{1},\cdots,x_{\Sigma_{j=1}^{i}k_{j}}}^{\alpha_{1}
+\cdots+\alpha_{i}}f\|_{L^{p}(\mathbb{R}^{n})}.
\ea$$
Consequently, we complete the proof of this theorem.
\end{proof}

\begin{proof} [Proof of Corollary \ref{coro}]
It suffices  to notice that
$$\ba
&\B\|\f{f(x_{1},x_{2},\cdots,x_{n})}{\prod^{i}_{j=1}\B(\prod_{\ell=1}^{k_{j}} |x_{\sum^{j-1}_{m=1}k_{m}+\ell}|^{\f{\alpha_{j}}{k_{j}}}\B)}
\B\|_{L^{\overrightarrow{p_{1}},\overrightarrow{q_{1}}}
(\mathbb{R}^{  k_{1}})\cdots L^{\overrightarrow{p_{i}},\overrightarrow{q_{i}}}(\mathbb{R}^{ k_{i}})}\\\leq&C
\B\|\f{1}{\prod^{i}_{j=1}\B(\prod_{\ell=1}^{k_{j}} |x_{\sum^{j-1}_{m=1}k_{m}+\ell}|^{\f{\alpha_{j}}{k_{j}}}\B)}
\B\|_{L^{\f{k_{1}}{\alpha_{1}},\infty}(\mathbb{R}^{k_{1}})\cdots
 L^{\f{k_{i}}{\alpha_{i}},\infty}(\mathbb{R}^{k_{i}})}\|f\|_{L^{\overrightarrow{p_{1}^{\ast}},\overrightarrow{q_{1}} }(\mathbb{R}^{k_{1}})\cdots
L^{\overrightarrow{p_{i}^{\ast}},\overrightarrow{q_{i}}}(\mathbb{R}^{k_{i}})}, \ea$$
where $\frac{1}{\overrightarrow{p_{j}^{\ast}}}=\frac{1}{\overrightarrow{p_{j}}}-\frac{\alpha_{j}}{k_{j}}$ for $1\leq j\leq i$.
Arguing in the same manner as above,
we can achieve the proof of this  corollary. We leave this to the interested readers.
\end{proof}
  \section{An
application of Hardy-Sobolev   inequality to the  axisymmetric  Navier-Stokes equations}
 In this section, we will show that the  anisotropic Hardy-Sobolev inequality derived in the last section is useful for studying the solutions of  3D  axisymmetric  Navier-Stokes equations.
 \begin{proof}[Proof of Theorem \ref{the1.4}]
For  an axisymmetric function $f$,  there holds $|\partial_{r}f|^{2}+|\partial_{x_{3}}f|^{2}=|\nabla f|^{2}$.
Hence, taking the inner product of $\Phi$ equation in \eqref{w} with $\Phi$ and integrating by parts yields
\begin{equation}\label{4.2}\begin{split}
\frac12\frac{d}{dt}\|\Phi\|^{2}_{L^{2}(\mathbb{R}^3)}+\|\nabla \Phi\|_{L^{2}(\mathbb{R}^3)}^2 &=\int_{\mathbb{R}^3} \Phi (\omega_{r}\partial_{r}+\omega_{z}\partial_{x_3})\frac{u_{r}}{r} dx\\
&=\int_{\mathbb{R}^3} u_{\theta}(\partial_{r}\frac{u_{r}}{r} \partial_{x_3}\Phi-\partial_{x_3}\frac{u_{r}}{r} \partial_{r}\Phi)   dx.
\end{split}
\end{equation}
Similarly,
\begin{equation} \label{4.1}
\frac12\frac{d}{dt}\|\Gamma\|^{2}_{L^{2}(\mathbb{R}^3)}+\|\nabla \Gamma\|_{L^{2}(\mathbb{R}^3)}^2 =-2\int_{\mathbb{R}^3} u_{\theta} \frac{\Gamma}{r} \Phi  dx.
\end{equation}
We estimate the terms on the right hand side of \eqref{4.2} and \eqref{4.1} in the following two cases respectively.
\\(1) If
\be\ba
&|x_{3}|^{\alpha}u_{\theta}\in L^{q}(0,T; L^{p}(\mathbb{R}^{3})),\\
& \f2q+\f3p=1-\alpha, 0\leq \alpha<\f14, \f{3}{1-\alpha}<p\leq\infty, \f{2}{1-\alpha}\leq q<\infty.
\ea\ee
For the first term on the right hand of \eqref{4.2}, it follows from the H\"older inequality that
\be\ba\label{4.3}
\B|\int_{\mathbb{R}^3} u_{\theta} \partial_{r}\frac{u_{r}}{r} \partial_{x_3}\Phi dx\B|
=& \B|\int_{\mathbb{R}^3} u_{\theta}|x_{3}|^{\alpha} \f{\partial_{r}\frac{u_{r}}{r} }{|x_{i}|^{ \alpha } }\partial_{x_3}\Phi dx\B|\\
\leq&\|u_{\theta}|x_{3}|^{\alpha} \|_{L^{p}(\mathbb{R}^3)}  \B\|  \f{\partial_{r}\frac{u_{r}}{r} }{|x_{3}|^{ \alpha } } \B\|_{L^{\f{2p}{p-2}}(\mathbb{R}^3)} \|\partial_{x_3}\Phi\|_{L^{2}(\mathbb{R}^3)}.
 \ea\ee
Since  $p>\f3{1-\alpha}$ and $\alpha<\f14$ ensure  $\alpha<\f{p-2}{2p},$
the Hardy inequality in Theorem \ref{the1.2} can be applied
 $$
  \B\|  \f{\partial_{r}\frac{u_{r}}{r} }{|x_{3}|^{ \alpha } } \B\|_{L^{\f{2p}{p-2}}(\mathbb{R}^3)}\leq C
 \B\| \Lambda^{\alpha}\partial_{r}\frac{u_{r}}{r}   \B\|_{L^{\f{2p}{p-2}}(\mathbb{R}^3)}.
 $$
Thanks to the fractional Gagliardo-Nirenberg inequalities   (see, e.g.\cite{[WXW],[WWY]}   and references therein) and  \eqref{R-1}, we infer that
$$
  \B\| \Lambda^{\alpha}\partial_{r}\frac{u_{r}}{r}   \B\|_{L^{\f{2p}{p-2}}(\mathbb{R}^3)} \leq C
 \B \|  \partial_{r}\frac{u_{r}}{r}   \B\|_{L^{2}(\mathbb{R}^3)}^{\f{p-3-p\alpha}{p}}\B\| \nabla\partial_{r}\frac{u_{r}}{r}   \B\|^{\f{3+p\alpha}{p}}_{L^{2}(\mathbb{R}^3)}
\leq C
 \| \Gamma   \|_{L^{2}(\mathbb{R}^3)}^{\f{p-3-p\alpha}{p}} \| \nabla\Gamma  \|^{\f{3+p\alpha}{p}}_{L^{2}(\mathbb{R}^3)}.
 $$
Therefore,
\be\label{4.4}
\B\|  \f{\partial_{r}\frac{u_{r}}{r} }{|x_{3}|^{ \alpha } } \B\|_{L^{\f{2p}{p-2}}(\mathbb{R}^3)}\leq C \| \Gamma   \|_{L^{2}(\mathbb{R}^3)}^{\f{p-3-p\alpha}{p}} \| \nabla\Gamma  \|^{\f{3+p\alpha}{p}}_{L^{2}(\mathbb{R}^3)}.
\ee
Inserting \eqref{4.4} into \eqref{4.3} and using the Young inequality yield
\be\ba\label{k1}
\B| \int_{\mathbb{R}^3} u_{\theta} \partial_{r}\frac{u_{r}}{r} \partial_{x_3}\Phi dx\B|
 \leq&C\|u_{\theta}|x_{3}|^{\alpha} \|^{\frac{2p}{p-p\alpha-3}}_{L^{p}(\mathbb{R}^3)}  \| \Gamma   \|^2_{L^{2}(\mathbb{R}^3)}+\frac14 (\| \nabla\Gamma  \|^2_{L^{2}(\mathbb{R}^3)}+\|\nabla\Phi\|^2_{L^{2}(\mathbb{R}^3)}).
  \ea\ee
  Likewise,
 \be\label{k2}
\B| \int_{\mathbb{R}^3} u_{\theta } \partial _{z}\frac {u_{r}}{r} \partial _{r}\Phi dx\B|  \leq C\|u_{\theta}|x_{3}|^{\alpha} \|^{\frac{2p}{p-p\alpha-3}}_{L^{p}(\mathbb{R}^3)}  \| \Gamma   \|^2_{L^{2}(\mathbb{R}^3)}+\frac14 (\| \nabla\Gamma  \|^2_{L^{2}(\mathbb{R}^3)}+\|\nabla\Phi\|^2_{L^{2}(\mathbb{R}^3)}).
\ee
It remains to bound the term on the right hand side of \eqref{4.1}.  To this end, using the H\"older inequality, we conclude  that
 $$\ba
 \B|\int_{\mathbb{R}^3} u_{\theta} \frac{\Gamma}{r} \Phi dx\B|=&\B| \int_{\mathbb{R}^3} u_{\theta} |x_{3}|^{\alpha} \f{\Gamma}{ |x_{3}|^{\f\alpha2}r^{\f12}}\f{\Phi}{ |x_{3}|^{\f\alpha2}r^{\f12}}dx\B|\\
\leq&\|u_{\theta}|x_{3}|^{\alpha} \|_{L^{p}(\mathbb{R}^3)}  \B\| \f{\Gamma}{ |x_{3}|^{\f\alpha2}r^{\f12}}\B\|_{L^{\f{2p}{p-1}}(\mathbb{R}^3)}\B\| \f{\Phi}{ |x_{3}|^{\f\alpha2}r^{\f12}}\B\|_{L^{\f{2p}{p-1}}(\mathbb{R}^3)}.
 \ea
 $$
 For $p>\frac{3}{1-\alpha}$ and $\alpha<\frac14,$ using  Hardy-Sobolev inequality  \eqref{anHS} and  the Gagliardo-Nirenberg inequality again yield
 $$\B\| \f{\Gamma}{ |x_{3}|^{\f\alpha2}r^{\f12}}\B\|_{L^{\f{2p}{p-1}}(\mathbb{R}^3)}\leq C \|\Lambda^{\f{1+\alpha}{2}}\Gamma\|_{L^{\f{2p}{p-1}}(\mathbb{R}^3)}\leq C\|\Gamma\|_{L^{2}(\mathbb{R}^3)} ^{\f{p-3-p\alpha}{2p}}\|\nabla\Gamma\|_{L^{2}(\mathbb{R}^3)}^{\f{p+3+p\alpha}{2p}}.
 $$
 Similarly,
 $$\B\| \f{\Phi}{ |x_{3}|^{\f\alpha2}r^{\f12}}\B\|_{L^{\f{2p}{p-1}}(\mathbb{R}^3)}\leq C \|\Lambda^{\f{1+\alpha}{2}}\Phi\|_{L^{\f{2p}{p-1}}(\mathbb{R}^3)}\leq C\|\Phi\|_{L^{2}(\mathbb{R}^3)} ^{\f{p-3-p\alpha}{2p}}\|\nabla\Phi\|_{L^{2}(\mathbb{R}^3)}^{\f{p+3+p\alpha}{2p}}.
 $$
Hence, by using the Young inequality, we arrive at
\be\ba\label{4.10}
&\B|\int_{\mathbb{R}^3} u_{\theta} \frac{\Gamma}{r} \Phi dx  \B|\\\leq & C\|u_{\theta}|x_{3}|^{\alpha} \|^{\frac{2p}{p-p\alpha-3}}_{L^{p}(\mathbb{R}^3)} (\| \Phi\|^2_{L^{2}(\mathbb{R}^3)}+\| \Gamma\|^2_{L^{2}(\mathbb{R}^3)}) +\frac14(\|\nabla\Phi\|^2_{L^{2}(\mathbb{R}^3)}+\|\nabla\Gamma\|^2_{L^{2}
(\mathbb{R}^3)}).
\ea\ee
 Combining   \eqref{4.2}  and \eqref{4.1} with \eqref{k1}, \eqref{k2} and \eqref{4.10}, we find
$$ \ba
&\frac{d}{dt}\big(\|\Phi\|^{2}_{L^{2}(\mathbb{R}^3)}+\|\Gamma\|^{2}_{L^{2}(\mathbb{R}^3)}\big)+\big(\|\nabla \Phi\|_{L^{2}(\mathbb{R}^3)}^2+\|\nabla \Gamma\|_{L^{2}(\mathbb{R}^3)}^2\big) \\ \leq&C\|u_{\theta}|x_{3}|^{\alpha} \|^{\frac{2p}{p-p\alpha-3}}_{L^{p}(\mathbb{R}^3)} (\| \Phi\|^2_{L^{2}(\mathbb{R}^3)}+\| \Gamma\|^2_{L^{2}(\mathbb{R}^3)}).
\ea$$
The Gronwall lemma and \eqref{WWY1}  enables us to  obtain
\begin{equation*}
\|\Gamma\|_{L^\infty(0,T;L^2) }\leq C<\infty.
\end{equation*}
(2) If $u_{\theta} |x_{3}|^{\alpha}\in L^{\infty}(0,T;L^{\f{3}{1-\alpha}}(\mathbb{R}^{3}))$ and  the norm of     $\|u_{\theta} |x_{3}|^{\alpha}\|_{L^{\infty}(0,T;L^{\f{3}{1-\alpha}}(\mathbb{R}^{3}))} $ is sufficiently small.
\\ Similarly  to \eqref{4.3}, we know that
 $$\ba
\B| \int_{\mathbb{R}^3} u_{\theta} \partial_{r}\frac{u_{r}}{r} \partial_{x_3}\Phi dx\B|
\leq&\|u_{\theta}|x_{3}|^{\alpha} \|_{L^{\f{3}{1-\alpha}}(\mathbb{R}^3)}  \B\|  \f{\partial_{r}\frac{u_{r}}{r} }{|x_{3}|^{ \alpha } } \B\|_{L^{\f{6}{1+2\alpha}}(\mathbb{R}^3)} \|\partial_{x_3}\Phi\|_{L^{2}(\mathbb{R}^3)}.\ea$$
Since $\alpha<\f14,$  it follows that  $\alpha<\f{6}{1+2\alpha}$. Then, we can invoke  Hardy inequality \eqref{anHS} and the Sobolev inequality to obtain
$$\B\|  \f{\partial_{r}\frac{u_{r}}{r} }{|x_{3}|^{ \alpha } } \B\|_{L^{\f{6}{1+2\alpha}}(\mathbb{R}^3)} \leq C\B\|  \Lambda^{\alpha}\partial_{r}\frac{u_{r}}{r}   \B\|_{L^{\f{6}{1+2\alpha}}(\mathbb{R}^3)}\leq C\|\nabla\partial_{r}\frac{u_{r}}{r} \|_{L^{2}(\mathbb{R}^3)}\leq C\|\nabla\Gamma \|_{L^{2}(\mathbb{R}^3)},$$
which turns out that
\be\label{k11}
 \B|\int_{\mathbb{R}^3} u_{\theta} \partial_{r}\frac{u_{r}}{r} \partial_{x_3}\Phi \B|
\leq C
 \|u_{\theta} |x_{3}|^{\alpha}\|_{L^{\f{3}{1-\alpha}}(\mathbb{R}^3)}
 (\|\nabla\Phi\|^2_{L^{2}(\mathbb{R}^3)}+\|\nabla\Gamma\|^2_{L^{2}(\mathbb{R}^3)}).
\ee
In   the same manner as above, there holds
\be\label{k12}
 \B|\int_{\mathbb{R}^3} u_{\theta } \partial _{x_{3}}\frac {u_{r}}{r} \partial _{r}\Phi \B|\leq  C
 \|u_{\theta} |x_{3}|^{\alpha}\|_{L^{\f{3}{1-\alpha}}(\mathbb{R}^3)}
 (\|\nabla\Phi\|^2_{L^{2}(\mathbb{R}^3)}+\|\nabla\Gamma\|^2_{L^{2}(\mathbb{R}^3)}).
 \ee
 Using the H\"older inequality, we see that
$$\ba
 \B|\int_{\mathbb{R}^3} u_{\theta} \frac{\Gamma}{r} \Phi \B|
\leq&\|u_{\theta}|x_{3}|^{\alpha} \|_{L^{\f{3}{1-\alpha}}(\mathbb{R}^3)}  \B\| \f{\Gamma}{ |x_{3}|^{\f\alpha2}r^{\f12}}\B\|_{L^{\f{6}{2+\alpha}}(\mathbb{R}^3)}\B\| \f{\Phi}{ |x_{3}|^{\f\alpha2}r^{\f12}}\B\|_{L^{\f{6}{2+\alpha}}(\mathbb{R}^3)}.
 \ea
 $$
Since $0<\alpha<\f14,$  then $\f12< \f{2+\alpha}{3}$ and $ \f\alpha2<\f{2+\alpha}{6}.$ As a result, we can use  Hardy-Sobolev inequality  \eqref{anHS} and Sobolev embedding theorem agian to get
$$\ba
&\B\| \f{\Gamma}{ |x_{3}|^{\f\alpha2}r^{\f12}}\B\|_{L^{\f{6}{2+\alpha}}(\mathbb{R}^3)}\leq
 C\B\|\Lambda^{\f{\alpha+1}{2}}\Gamma \B\|_{L^{\f{6}{2+\alpha}}(\mathbb{R}^3)}\leq
  C\|\nabla\Gamma  \|_{L^{2}(\mathbb{R}^3)}\\
&\B\| \f{\Phi}{ |x_{3}|^{\f\alpha2}r^{\f12}}\B\|_{L^{\f{6}{2+\alpha}}(\mathbb{R}^3)}\leq C
 \B\|\Lambda^{\f{\alpha+1}{2}}\Phi \B\|_{L^{\f{6}{2+\alpha}}(\mathbb{R}^3)}\leq C
  \|\nabla\Phi  \|_{L^{2}(\mathbb{R}^3)}.
\ea$$
Therefore,
\be\label{k13}
\B|\int_{\mathbb{R}^3} u_{\theta} \frac{\Gamma}{r} \Phi \B|\leq C
 \|u_{\theta} |x_{3}|^{\alpha}\|_{L^{\f{3}{1-\alpha}}(\mathbb{R}^3)}\|\nabla\Phi  \|_{L^{2}(\mathbb{R}^3)} \|\nabla\Gamma\|_{L^{2}(\mathbb{R}^3)}.
 \ee
Plugging \eqref{k11}, \eqref{k12}, \eqref{k13} into
  \eqref{4.1} and  \eqref{4.2}, we find out that
$$\ba &\frac{d}{dt}\big(\|\Phi\|^{2}_{L^{2}(\mathbb{R}^3)}+
\|\Gamma\|^{2}_{L^{2}(\mathbb{R}^3)}\big)+C\big(\|\nabla \Phi\|_{L^{2}(\mathbb{R}^3)}^2+\|\nabla \Gamma\|_{L^{2}(\mathbb{R}^3)}^2\big)\\
  \leq& C
 \|u_{\theta} |x_{3}|^{\alpha}\|_{L^{\f{3}{1-\alpha}}}
 (\|\nabla\Phi\|^2_{L^{2}(\mathbb{R}^3)}+\|\nabla\Gamma\|^2_{L^{2}(\mathbb{R}^3)}).
\ea $$
Because the norm of     $\|u_{\theta} |x_{3}|^{\alpha}\|_{L^{\infty}(0,T;L^{\f{3}{1-\alpha}}(\mathbb{R}^{3}))} $ is sufficiently small, we can obtain
 $\Gamma \in L^{\infty}(0,T;L^{2}(\mathbb{R}^{3}))$.

  At this position, Lemma    \ref{lem2.2}      helps us to complete  the proof of this theorem.
  \end{proof}

  \begin{proof}[Proof of Theorem \ref{the1.5}]
  Taking advantage of the H\"older inequality \eqref{HolderIQ} in mixed-Lorentz space, we have
\be\ba\label{4.16}
\B| \int_{\mathbb{R}^3} u_{\theta} \partial_{r}\frac{u_{r}}{r} \partial_{x_3}\Phi dx\B|\leq C\|r^{\alpha}u_{\theta}\|_{L_{x_{3}}^{p_{3},\infty}
 L_{x_{2}}^{p_{2},\infty}L_{x_{1}}^{p_{1},\infty}(\mathbb{R}^3)}
\B\|\f{\partial_{r}\frac{u_{r}}{r}}{r^{\alpha}}\B\|_{L_{x_{3}}^{\f{2p_{3}}{p_{3}-2},2}
 L_{x_{2}}^{\f{2p_{2}}{p_{2}-2},2}L_{x_{1}}^{\f{2p_{1}}{p_{1}-2},2}(\mathbb{R}^3)}
 \|\partial_{x_3}\Phi\|_{L^{2}(\mathbb{R}^3)}.
\ea\ee
  In view of $p_{1},p_{2}>\f{2}{1-\alpha}$, we can apply the Hardy-Sobolev
  inequality  \eqref{anHS1} in Corollary  \ref{coro} to conclude that
$$
\ba
\B\|\f{\partial_{r}\frac{u_{r}}{r}}{r^{\alpha}}
\B\|_{L_{x_{3}}^{\f{2p_{3}}{p_{3}-2},2}
 L_{x_{2}}^{\f{2p_{2}}{p_{2}-2},2}L_{x_{1}}^{\f{2p_{1}}{p_{1}-2},2}(\mathbb{R}^3)}
 \leq & C \B\|\Lambda^{\alpha}_{x_{1},x_{2}}\partial_{r}\frac{u_{r}}{r}\B\|_{L_{x_{3}}^{\f{2p_{3}}{p_{3}-2},2}
 L_{x_{2}}^{\f{2p_{2}}{p_{2}-2},2}L_{x_{1}}^{\f{2p_{1}}{p_{1}-2},2}(\mathbb{R}^3)}.
\ea$$
 The Sobolev embedding  \eqref{sl} and the Gagliardo-Nirenberg inequality further help   us to derive that
$$
\ba
 \B\|\Lambda^{\alpha}_{x_{1},x_{2}}\partial_{r}\frac{u_{r}}{r}\B\|_{L_{x_{3}}^{\f{2p_{3}}{p_{3}-2},2}
 L_{x_{2}}^{\f{2p_{2}}{p_{2}-2},2}L_{x_{1}}^{\f{2p_{1}}{p_{1}-2},2}(\mathbb{R}^3)}
   \leq& C\B\|\Lambda^{\sum_{i=1}^{3}\f1p_{i}+\alpha} \partial_{r}\frac{u_{r}}{r}\B\|_{L^{2}(\mathbb{R}^3)}\\
 \leq& C\B\| \partial_{r}\frac{u_{r}}{r}\B\|^{1-\alpha-\sum_{i=1}^{3}\f1p_{i}}_{L^{2}(\mathbb{R}^3)}\B\| \nabla\partial_{r}\frac{u_{r}}{r}\B\|_{L^{2}(\mathbb{R}^3)}^{ \alpha+\sum_{i=1}^{3}\f1p_{i}}.
\ea$$
Therefore, we arrive at
$$
\ba
\B\|\f{\partial_{r}\frac{u_{r}}{r}}{r^{\alpha}}\B\|_{L_{x_{3}}^{\f{2p_{3}}{p_{3}-2},2}
 L_{x_{2}}^{\f{2p_{2}}{p_{2}-2},2}L_{x_{1}}^{\f{2p_{1}}{p_{1}-2},2}(\mathbb{R}^3)}
 \leq &
C\| \Gamma\|^{1-\alpha-\sum_{i=1}^{3}\f1p_{i}}_{L^{2}(\mathbb{R}^3)}\| \nabla\Gamma\|_{L^{2}(\mathbb{R}^3)}^{ \alpha+\sum_{i=1}^{3}\f1p_{i}}.
\ea$$
Substituting this   into  \eqref{4.16},  we infer that
 $$\ba
 &\B|\int_{\mathbb{R}^3} u_{\theta} \partial_{r}\frac{u_{r}}{r} \partial_{x_3}\Phi dx\B|\\
\leq&C\|r^{\alpha}u_{\theta}\|_{L_{x_{3}}^{p_{3},\infty}
 L_{x_{2}}^{p_{2},\infty}L_{x_{1}}^{p_{1},\infty}(\mathbb{R}^3)}
\| \Gamma\|^{1-\alpha-\sum_{i=1}^{3}\f1p_{i}}_{L^{2}(\mathbb{R}^3)}\| \nabla\Gamma\|_{L^{2}(\mathbb{R}^3)}^{ \alpha+\sum_{i=1}^{3}\f1p_{i}}
 \|\partial_{x_3}\Phi\|_{L^{2}(\mathbb{R}^3)} \\
\leq& C\|r^{\alpha}u_{\theta}\|_{L_{x_{3}}^{p_{3},\infty}
 L_{x_{2}}^{p_{2},\infty}L_{x_{1}}^{p_{1},\infty}(\mathbb{R}^3)}
 \|\Gamma\|_{L^{2}(\mathbb{R}^3)}^{1-\alpha-\sum_{i=1}^{3}\f1p_{i}}
 (\|\nabla\Phi\|_{L^{2}(\mathbb{R}^3)}
 +\|\nabla\Gamma\|_{L^{2}(\mathbb{R}^3)})^{1+\alpha+\sum_{i=1}^{3}\f1p_{i}}.
 \ea$$
Exactly as in the above derivation, we know that
 $$
  \B|\int_{\mathbb{R}^3} u_{\theta } \partial _{z}\frac {u_{r}}{r} \partial _{r}\Phi \B|
\leq C\|r^{\alpha}u_{\theta}\|_{L_{x_{3}}^{p_{3},\infty}
 L_{x_{2}}^{p_{2},\infty}L_{x_{1}}^{p_{1},\infty}(\mathbb{R}^3)}
\| \Gamma\|^{1-\alpha-\sum_{i=1}^{3}\f1p_{i}}_{L^{2}(\mathbb{R}^3)}\| \nabla\Gamma\|_{L^{2}(\mathbb{R}^3)}^{ \alpha+\sum_{i=1}^{3}\f1p_{i}}
 \|\nabla \Phi\|_{L^{2}(\mathbb{R}^3)} .
$$
Using the H\"older inequality \eqref{HolderIQ} once again, we get
\be\ba\label{4.17}
 &\B| \int_{\mathbb{R}^3} u_{\theta} \frac{\Gamma}{r} \Phi \B|\\=&  \B|\int_{\mathbb{R}^3} u_{\theta} |r|^{\alpha} \f{\Gamma}{ r^{\f{1+\alpha}{2}}}\f{\Phi}{ |r|^{\f{1+\alpha}{2} } } \B|\\
\leq& \|r^{\alpha}u_{\theta}\|_{L_{x_{3}}^{p_{3},\infty}
 L_{x_{2}}^{p_{2},\infty}L_{x_{1}}^{p_{1},\infty}(\mathbb{R}^3)}
\B\|\f{\Gamma}{ r^{\f{1+\alpha}{2}}}\B\|_{L_{x_{3}}^{\f{2p_{3}}{p_{3}-1},2}
 L_{x_{2}}^{\f{2p_{2}}{p_{2}-1},2}L_{x_{1}}^{\f{2p_{1}}{p_{1}-1},2}(\mathbb{R}^3)}
  \B\|\f{\Phi}{ r^{\f{1+\alpha}{2}}}\B\|_{L_{x_{3}}^{\f{2p_{3}}{p_{3}-1},2}
 L_{x_{2}}^{\f{2p_{2}}{p_{2}-1},2}L_{x_{1}}^{\f{2p_{1}}
 {p_{1}-1},2}(\mathbb{R}^3)}.
\ea\ee
According to $\f{1+\alpha}{2}<\f{p_{2}-1}{p_{2}},\f{p_{1}-1}{p_{1}}$, we derive from the Hardy-Sobolev inequality \eqref{anHS1} in Corollary  \ref{coro} and the inclusion relation \eqref{lincreases} that
$$\ba
\B\|\f{\Gamma}{ r^{\f{1+\alpha}{2}}}\B\|_{L_{x_{3}}^{\f{2p_{3}}{p_{3}-1},2}
 L_{x_{2}}^{\f{2p_{2}}{p_{2}-1},2}L_{x_{1}}^{\f{2p_{1}}{p_{1}-1},2}(\mathbb{R}^3)}
 \leq&C
 \B\|\Lambda_{x_{1},x_{2}}^{\f{1+\alpha}{2}}\Gamma \B\|_{L_{x_{3}}^{\f{2p_{3}}{p_{3}-1},2}
 L_{x_{2}}^{\f{2p_{2}}{p_{2}-1},2}L_{x_{1}}^{\f{2p_{1}}{p_{1}-1},2}(\mathbb{R}^3)}.
\ea$$
From the Sobolev embedding  \eqref{sl} and  the Gagliardo-Nirenberg inequality, we get
$$\ba
\B\|\Lambda_{x_{1},x_{2}}^{\f{1+\alpha}{2}}\Gamma \B\|_{L_{x_{3}}^{\f{2p_{3}}{p_{3}-1},2}
 L_{x_{2}}^{\f{2p_{2}}{p_{2}-1},2}L_{x_{1}}^{\f{2p_{1}}{p_{1}-1},2}(\mathbb{R}^3)} \leq& C\B\|\Lambda^{\sum_{i=1}^{3}\f1{2p_{i}}+\f{1+\alpha}{2}}\Gamma \B\|_{L^{2}(\mathbb{R}^3)}
\\ \leq& C\| \Gamma\|_{L^{2}(\mathbb{R}^3)}^{\f{1-\alpha-\sum_{i=1}^{3}\f1p_{i}}{2}}\| \nabla\Gamma\|_{L^{2}(\mathbb{R}^3)}^{\f{1+\alpha+\sum_{i=1}^{3}\f1p_{i}}{2}}.
\ea$$
Hence, there holds
\be\ba\label{4.18}
\B\|\f{\Gamma}{ r^{\f{1+\alpha}{2}}}\B\|_{L_{x_{3}}^{\f{2p_{3}}{p_{3}-1},2}
 L_{x_{2}}^{\f{2p_{2}}{p_{2}-1},2}L_{x_{1}}^{\f{2p_{1}}{p_{1}-1},2}(\mathbb{R}^3)}\leq&
 C\| \Gamma\|_{L^{2}(\mathbb{R}^3)}^{\f{1-\alpha-\sum_{i=1}^{3}\f1p_{i}}{2}}\| \nabla\Gamma\|_{L^{2}(\mathbb{R}^3)}^{\f{1+\alpha+\sum_{i=1}^{3}\f1p_{i}}{2}}.
\ea\ee
Similarly,  we have
\be\ba\label{4.19}
\B\|\f{\Phi}{ r^{\f{1+\alpha}{2}}}\B\|_{L_{x_{3}}^{\f{2p_{3}}{p_{3}-1},2}
 L_{x_{2}}^{\f{2p_{2}}{p_{2}-1},2}L_{x_{1}}^{\f{2p_{1}}{p_{1}-1},2}(\mathbb{R}^3)}
 \leq&C
 \| \Phi\|_{L^{2}(\mathbb{R}^3)}^{\f{1-\alpha-\sum_{i=1}^{3}\f1p_{i}}{2}}\| \nabla\Phi\|_{L^{2}(\mathbb{R}^3)}^{\f{1+\alpha+\sum_{i=1}^{3}\f1p_{i}}{2}}.
\ea\ee
Putting \eqref{4.17},  \eqref{4.18}, \eqref{4.19} together, we obtain
$$\ba
 &\B|\int_{\mathbb{R}^3} u_{\theta} \frac{\Gamma}{r} \Phi\B|
 \\
 \leq& C\|r^{\alpha}u_{\theta}\|_{L_{x_{3}}^{p_{3},\infty}
 L_{x_{2}}^{p_{2},\infty}L_{x_{1}}^{p_{1},\infty}(\mathbb{R}^3)}
  (\| \Gamma\|_{L^{2}}+\| \Phi\|_{L^{2}(\mathbb{R}^3)})^{ 1-\alpha-\sum_{i=1}^{3}\f1p_{i} }\\
  &\times(\| \nabla\Phi\|_{L^{2}}+\| \nabla\Phi\|_{L^{2}(\mathbb{R}^3)})^{ 1+\alpha+\sum_{i=1}^{3}\f1p_{i} }.
 \ea$$
Putting the above estimates together, we arrive at
  $$\ba
 &\frac{d}{dt}\big(\|\Phi\|^{2}_{L^{2}(\mathbb{R}^3)}
 +\|\Gamma\|^{2}_{L^{2}(\mathbb{R}^3)}\big)+C\big(\|\nabla \Phi\|_{L^{2}(\mathbb{R}^3)}^2+\|\nabla \Gamma\|_{L^{2}(\mathbb{R}^3)}^2\big)\\
 \leq&C\|r^{\alpha}u_{\theta}\|_{L_{x_{3}}^{p_{3},\infty}
 L_{x_{2}}^{p_{2},\infty}L_{x_{1}}^{p_{1},\infty}(\mathbb{R}^3)}
  (\| \Gamma\|_{L^{2}(\mathbb{R}^3)}+\| \Phi\|_{L^{2}(\mathbb{R}^3)})^{ 1-\alpha-\sum_{i=1}^{3}\f1p_{i} }\\
  &\times(\| \nabla\Phi\|_{L^{2}(\mathbb{R}^3)}+\| \nabla\Phi\|_{L^{2}(\mathbb{R}^3)})^{ 1+\alpha+\sum_{i=1}^{3}\f1p_{i} }.
 \ea$$

Case 1:
If $ \alpha+\sum_{i=1}^{3}\f1p_{i}<1 $, we conclude by the Young inequality that
 $$\ba
 &\frac{d}{dt}\big(\|\Phi\|^{2}_{L^{2}(\mathbb{R}^3)}+\|\Gamma\|^{2}_{L^{2}(\mathbb{R}^3)}\big)+C\big(\|\nabla \Phi\|_{L^{2}(\mathbb{R}^3)}^2+\|\nabla \Gamma\|_{L^{2}(\mathbb{R}^3)}^2\big)\\
 \leq&
 C\|r^{\alpha}u_{\theta}\|_{L_{x_{3}}^{p_{3},\infty}
 L_{x_{2}}^{p_{2},\infty}L_{x_{1}}^{p_{1},\infty}}
 (\| \Gamma   \|_{L^{2}(\mathbb{R}^3)}^{2}+\| \Phi  \|_{L^{2}(\mathbb{R}^3)}^{2} )+\f18(\| \nabla\Gamma  \|_{L^{2}(\mathbb{R}^3)}^{2}+\|\partial_{x_3}\Phi\|^{2}_{L^{2}(\mathbb{R}^3)}). \ea$$
 The Gronwall inequality and Lemma \ref{lem2.2} entail us to achieve the proof.

Case 2:
 If $ \alpha+\sum_{i=1}^{3}\f1p_{i}=1 $
 $$\ba
& \frac{d}{dt}\big(\|\Phi\|^{2}_{L^{2}(\mathbb{R}^3)}+\|\Gamma\|^{2}_{L^{2}(\mathbb{R}^3)}\big)+C\big(\|\nabla \Phi\|_{L^{2}(\mathbb{R}^3)}^2+\|\nabla \Gamma\|_{L^{2}(\mathbb{R}^3)}^2\big)
 \\\leq& C\|r^{\alpha}u_{\theta}\|_{L_{x_{3}}^{p_{3},\infty}
 L_{x_{2}}^{p_{2},\infty}L_{x_{1}}^{p_{1},\infty}(\mathbb{R}^3)}
  (\| \nabla\Phi\|_{L^{2}}+\| \nabla\Phi\|_{L^{2}(\mathbb{R}^3)})^{ 2 }
 \\\leq& \f12
  (\| \nabla\Phi\|_{L^{2}}+\| \nabla\Phi\|_{L^{2}(\mathbb{R}^3)})^{ 2 },
  \ea$$
  which leads to $\Phi, \Gamma \in L^{\infty}(L^{2}(0,T;\mathbb{R}^{3}))$. At this stage, one can use
  the Lemma \ref{lem2.2} once again to finish the proof of this theorem.
 \end{proof}
\begin{proof}[Proof of Theorem \ref{the1.6}]
According to the result in  \cite{[KP],[NP],[ZZ]}, it suffices  to show that $\f{u_{\theta}}{r}\in L^{4}(0,T;L^{4}(\mathbb{R}^{3}))$. Indeed,
from   $\eqref{ANS}_{1}$ and $\eqref{ANS}_{3}$, we conclude by the standard energy estimate  that
\be\label{4.20}
\f{1}{p}\f{d}{dt}\int_{\mathbb{R}^3} |u_{\theta}|^{\ell}dx +\f{\ell-1}{(\f{\ell}{2})^{2}}
\int_{\mathbb{R}^3} |\nabla u_{\theta}|^{2}dx+\int_{\mathbb{R}^3} \f{|u_{\theta}|^{\ell}}{r}dx= \int_{\mathbb{R}^3} \f1r u_{r}|u_{\theta}|^{\ell}dx.
\ee
The Young inequality ensures that
$$\ba
 \B|\int_{\mathbb{R}^3} \f1r u_{r}|u_{\theta}|^{\ell}dx \B|=&
 \B| \int_{\mathbb{R}^3}  u_{r}r^{\alpha}  |u_{\theta}|^{\ell-\f{\ell(1+\alpha)}{2}}\f{|u_{\theta}|^{\f{\ell(1+\alpha)}{2}}}{r^{1+\alpha}} dx \B|
\\ \leq&  \int_{\mathbb{R}^3} |u_{r}r^{\alpha}|^{\f{2}{1-\alpha}} |u_{\theta}|^{\ell}+\f18\int_{\mathbb{R}^3} \f{|u_{\theta}|^{\ell} }{r^{2}}dx.
\ea$$
Taking advantage of the H\"older inequality, we see that
$$\ba
 \B|\int_{\mathbb{R}^3} (u_{r}r^{\alpha})^{\f{2}{1-\alpha}} |u_{\theta}|^{\ell}dx \B|
\leq&   C\||u_{r}r^{\alpha}|^{\f{2}{1-\alpha}}
\|_{L^{\f{\overrightarrow{p}}{\f{2}{1-\alpha}},\infty}(\mathbb{R}^3)}  \||u_{\theta}|^{\f{\ell}{2}}\|^{2}_{L^{\f{2p_{1}(1-\alpha)}{p_{1}(1-\alpha)-2},2}
L^{\f{2p_{2}(1-\alpha)}{p_{2}(1-\alpha)-2},2}L^{\f{2p_{3}(1-\alpha)}{p_{3}(1-\alpha)-2},2}(\mathbb{R}^3)}.
\ea
$$
It follows from the Sobolev embedding theorem \eqref{sl} and the interpolation inequality that
$$\ba
 \||u_{\theta}|^{\f{\ell}{2}}\|_{L^{\f{2p_{1}(1-\alpha)}{p_{1}(1-\alpha)-2},2}
L^{\f{2p_{2}(1-\alpha)}{p_{2}(1-\alpha)-2},2}L^{\f{2p_{3}(1-\alpha)}{p_{3}(1-\alpha)-2},2}(\mathbb{R}^3)}
\leq& C\|\Lambda^{\sum_{i}\f{2}{p_{i}(1-\alpha)}}|u_{\theta}|^{\f{\ell}{2}}\|_{L^{2}(\mathbb{R}^3)}\\
\leq&
C\||u_{\theta}|^{\f{\ell}{2}}\|^{ 1-\sum_{i}\f{1}{p_{i}(1-\alpha)}}_{L^{2}(\mathbb{R}^3)}
\|\nabla|u_{\theta}|^{\f{\ell}{2}}\|_{L^{2}(\mathbb{R}^3)}^{\sum_{i}\f{1}{p_{i}(1-\alpha)}}.
\ea$$
Based on this, there holds that
$$
\int_{\mathbb{R}^3} (u_{r}r^{\alpha})^{\f{2}{1-\alpha}} |u_{\theta}|^{\ell}\leq C\| u_{r}r^{\alpha}
\|^{\f{2}{1-\alpha}}_{L^{ \overrightarrow{p} ,\infty}} \||u_{\theta}|^{\f{\ell}{2}}\|^{ 2-\sum_{i}\f{2}{p_{i}(1-\alpha)}}_{L^{2}(\mathbb{R}^3)}
\|\nabla|u_{\theta}|^{\f{\ell}{2}}
\|_{L^{2}(\mathbb{R}^3)}^{\sum_{i}\f{2}{p_{i}(1-\alpha)}}.
$$
Substituting this into \eqref{4.20}, we know that
$$\ba
 &\f{1}{p}\f{d}{dt}\int_{\mathbb{R}^3} |u_{\theta}|^{\ell}dx +\f{\ell-1}{(\f{\ell}{2})^{2}}
\int_{\mathbb{R}^3} |\nabla u_{\theta}|^{2}dx+\int_{\mathbb{R}^3} \f{|u_{\theta}|^{\ell}}{r}dx\\
\leq& C
\| u_{r}r^{\alpha}
\|^{\f{2}{1-\alpha}}_{L^{ \overrightarrow{p},\infty }}   \||u_{\theta}|^{\f{\ell}{2}}\|^{ 2-\sum_{i}\f{2}{p_{i}(1-\alpha)}}_{L^{2}}
\|\nabla|u_{\theta}|^{\f{\ell}{2}}\|_{L^{2}}
^{\sum_{i}\f{2}{p_{i}(1-\alpha)}}+\f18\int_{\mathbb{R}^3} \f{|u_{\theta}|^{\ell} }{r^{2}}dx.
\ea$$
With this in hand,
by arguing as was done to prove   previous theorems, we complete the proof of this theorem.
\end{proof}
\section*{Acknowledgements}

Wang was partially supported by  the National Natural
 Science Foundation of China under grant (No. 11971446, No. 12071113   and  No.  11601492). Wei was partially supported by the National Natural Science Foundation of China under grant (No. 11601423, No. 11871057).
 Yu was partially supported by the
National Natural Science Foundation of China (NNSFC) (No. 11901040), Beijing Natural
Science Foundation (BNSF) (No. 1204030) and Beijing Municipal Education Commission
(KM202011232020).

\end{document}